\definecolor{darkblue}{rgb}{0.0,0.0,0.55}
\definecolor{darkred}{rgb}{0.5,0.0,0.0}
\numberwithin{equation}{section}
\declaretheorem[name=Corollary]{corollary}
\declaretheorem[name=Lemma]{lemma}
\declaretheorem[name=Proposition]{proposition}
\declaretheorem[name=Remark, style=remark]{remark}
\declaretheorem[name=Theorem]{theorem}
\declaretheorem[name=Definition]{definition}
\declaretheorem[name=Example]{example}
\declaretheorem[name=Experiment]{experiment}
\declaretheorem[name=Assumption]{assumption}
\declaretheorem[name=Fact]{fact}
\declaretheorem[name=Takeaway]{takeaway}
\newcommand{\D}{\mathrm{d}}
\newcommand{\vect}[1]{\ensuremath{\mathbf{#1}}}
\newcommand{\inp}[2]{\left\langle #1,~#2 \right\rangle}
\newcommand{\norm}[1]{\left\|{#1} \right\|}
\newcommand{\R}{\mathbb{R}} 
\newcommand{\E}{\mathbb{E}}
\newcommand{\vv}{\vect{v}}
\newcommand{\vx}{\vect{x}}
\newcommand{\vq}{\vect{q}}
\newcommand{\zz}[1]{{\boldsymbol\theta}^{#1}}
\newcommand{\lmax}[1]{\lambda_{\max}^{#1}}
\newcommand{\sm}[1]{L_{#1}}
\newcommand{\pp}{{\bm{p}}}
\newcommand{\vze}{{\bm{p}}}
\newcommand{\eps}{{\epsilon}}
\newcommand{\Xc}{\mathcal{X}}
\newcommand{\Nc}{\mathcal{N}}
\newcommand{\Sta}{\mathcal{S}}
\newcommand{\Stap}{\mathcal{C}}
\newcommand{\ur}{{the unstable regime}} 
\newcommand{\rp}{{relative progress}} 
\newcommand{\uc}{{unstable convergence}}
\newcommand{\loc}{W^{\sf s\oplus c}_{\sf loc}}
\newcommand{\dir}[2]{L(#2;#1)}
\newcommand{\rr}[1]{{\sf RP}(#1)}
\newcommand{\qqq}{\qed}
\definecolor{MITBrown}{RGB}{164, 31, 50} 
\newcommand\miteecs[1]{\textbf{\textcolor{MITBrown}{#1}}}
\icmltitlerunning{Unstable Convergence of GD}
\begin{document}

\twocolumn[
\icmltitle{Understanding the Unstable Convergence of Gradient Descent}

% It is OKAY to include author information, even for blind
% submissions: the style file will automatically remove it for you
% unless you've provided the [accepted] option to the icml2022
% package.

% List of affiliations: The first argument should be a (short)
% identifier you will use later to specify author affiliations
% Academic affiliations should list Department, University, City, Region, Country
% Industry affiliations should list Company, City, Region, Country

% You can specify symbols, otherwise they are numbered in order.
% Ideally, you should not use this facility. Affiliations will be numbered
% in order of appearance and this is the preferred way.
\icmlsetsymbol{equal}{*}

\begin{icmlauthorlist}
\icmlauthor{Kwangjun Ahn}{mit,simons}
\icmlauthor{Jingzhao Zhang}{tsing}
\icmlauthor{Suvrit Sra}{mit}
\end{icmlauthorlist}

\icmlaffiliation{simons}{Part of this work was done while Kwangjun Ahn was visiting the Simons Institute for the Theory of Computing, Berkeley, CA, USA.}

\icmlaffiliation{mit}{{\bf Department of EECS, MIT}, Cambridge, MA, USA}

\icmlaffiliation{tsing}{{\bf IIIS, Tsinghua University}, Beijing, China}

\icmlcorrespondingauthor{Kwangjun Ahn}{kjahn@mit.edu}

% You may provide any keywords that you
% find helpful for describing your paper; these are used to populate
% the "keywords" metadata in the PDF but will not be shown in the document
\icmlkeywords{Machine Learning, ICML}

\vskip 0.3in
]

% this must go after the closing bracket ] following \twocolumn[ ...

% This command actually creates the footnote in the first column
% listing the affiliations and the copyright notice.
% The command takes one argument, which is text to display at the start of the footnote.
% The \icmlEqualContribution command is standard text for equal contribution.
% Remove it (just {}) if you do not need this facility.

%\printAffiliationsAndNotice{}  % leave blank if no need to mention equal contribution
\printAffiliationsAndNotice{ } % otherwise use the standard text.

\begin{abstract}

%(Stochastic) gradient descent is one of the most fundamental optimization methods.  
Most existing analyses of (stochastic) gradient descent rely on the condition that for $L$-smooth costs, the step size is less than $2/L$.  However, many works have observed that in machine learning applications step sizes often do not fulfill this condition, yet (stochastic) gradient descent still converges, albeit in an unstable manner. We investigate this unstable convergence phenomenon from first principles, and discuss key causes behind it. We also identify its main characteristics, and how they interrelate based on both theory and experiments, offering a principled view toward understanding the phenomenon.
\end{abstract}

 \section{Introduction}
  
Gradient descent (GD) runs the iteration
\begin{align*} 
  \zz{t+1} = \zz{t} - \eta \nabla f(\zz{t})\,,
\end{align*}
seeking to optimize a cost function $f$. It also provides a conceptual foundation for stochastic gradient descent (SGD), one of the key algorithms in modern machine learning. A vast body of literature that analyzes (S)GD assumes that the cost $f$ is $L$-smooth (we say $f$ is $L$-smooth if $\|\nabla f(\zz{})-\nabla f(\zz{'})\|\le L\|\zz{ }-\zz{'}\|$ for all $\zz{},\zz{'}$), and subsequently exploits the associated ``\emph{descent lemma}'': 
 \begin{align} \label{descent}
     f(\zz{t+1}) \leq f(\zz{t}) -\eta \bigl(1 - L\frac{\eta}{2}\bigr)  \norm{\nabla f(\zz{t})}^2\,.
 \end{align} 
To ensure descent via inequality~\eqref{descent}, the condition
\begin{align}\label{cond:step}
  L < \frac{2}{\eta},
\end{align}
is imposed. This condition ensure that GD decreases the cost $f$ at each iteration.
Whenever condition~\eqref{cond:step} holds, we call it the \miteecs{``stable'' regime} in this paper.

When the cost is quadratic, condition~\eqref{cond:step} is in fact necessary for stablility: if $\eta > \frac{2}{L}$, then GD diverges (see~\autoref{fact:1}). This observation carries over to most convex optimization settings and also neural networks when using the neural tangent kernel approximations \cite{jacot2018neural,li2018learning,lee2019wide}. Thus, it is reasonable to assume condition~\eqref{cond:step} for those analyses. However, for general nonconvex costs, it is not clear whether the stable regime condition \eqref{cond:step} is required or even reasonable.

\begin{figure} 
	\centering
	{
		\includegraphics[width=0.49\columnwidth]{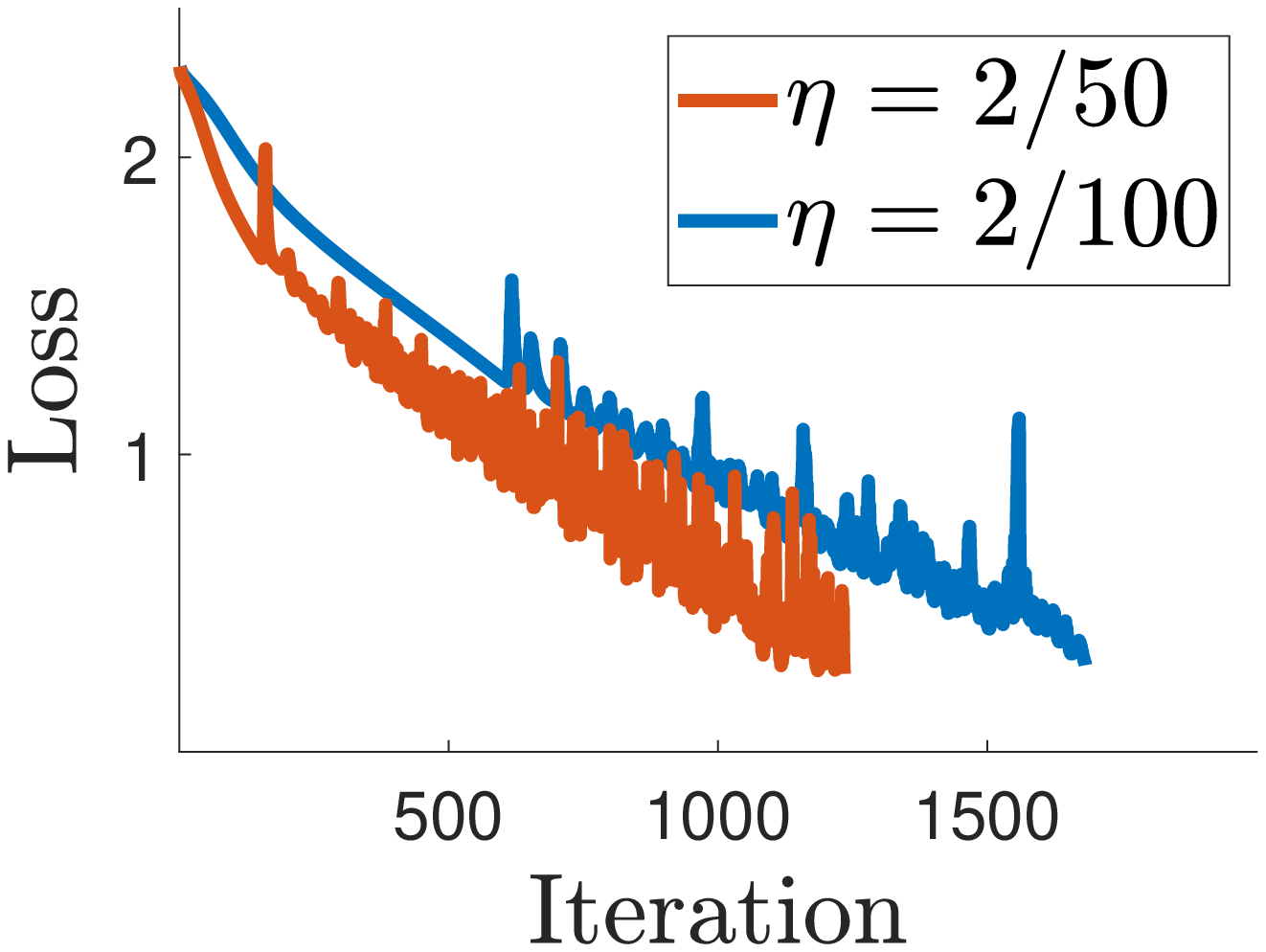} 	\includegraphics[width=0.49\columnwidth]{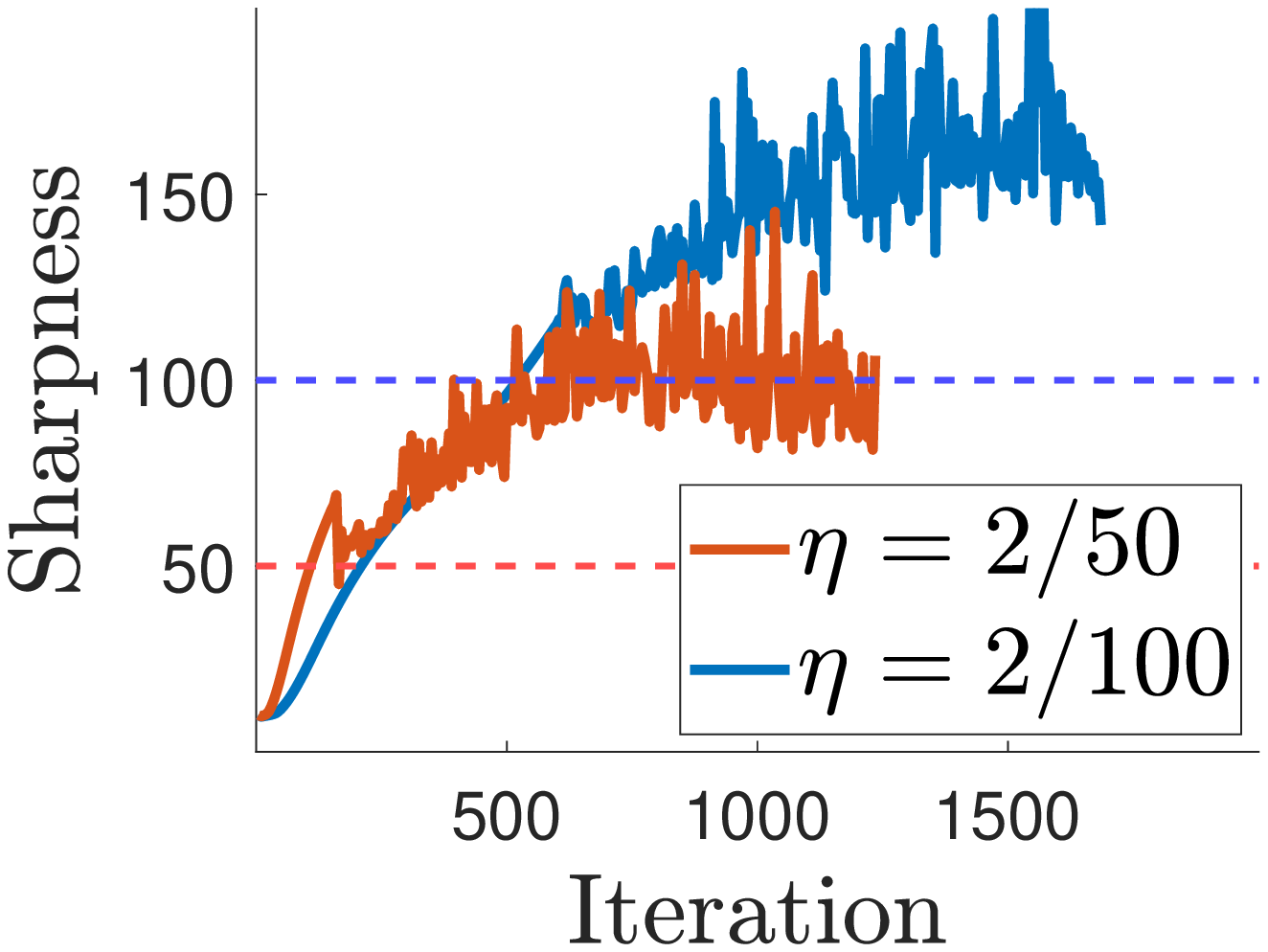}
	}  
	\caption{{\bf Example of \uc{}} for training CIFAR-10 with GD. We follow the experimental setup of \cite{Cohen2021}; see \autoref{ex:cifar} for details. We use a ReLU network. Here, condition \eqref{cond:step} fails, but the training loss still (non-monotonically) decreases in the long run.}
	\label{fig:1}
\end{figure}

Recently, it has been observed that GD on neural networks often violates condition~\eqref{cond:step}. More specifically, \citet{Cohen2021} observe that when we run GD to train a neural network, the condition \eqref{cond:step} fails, but contrary to the common wisdom from convex optimization, the training loss still (non-monotonically) decreases in the long run. See Figure~\ref{fig:1} for an example of this phenomenon. We call this phenomenon \miteecs{``unstable'' convergence}.
 
Unfortunately, very little is known about unstable convergence. The causes and implications of this phenomenon have not been explored in the literature. More importantly, the main features as well as the scope of this phenomenon have not been discussed. Characterizing the main features is important because it not only furnishes better understanding of this bizarre phenomenon, but also lays a foundation for future theoretical studies; especially, the main characteristics of this phenomenon will help build a more practical theory of the neural network optimization.

{\bf Contributions.} In light of the above motivation, the main contributions of this paper are as follows:
\begin{enumerate}
  \vspace*{-7pt}
  \setlength{\itemsep}{-1pt}
\item  We discuss the main causes driving the unstable convergence phenomenon (\autoref{sec:cause}).  
   
\item  We identify the main features that characterize unstable convergence in terms of how loss, iterates, and sharpness\footnote{In this paper, following \cite{Cohen2021},  sharpness means the  maximum eigenvalue of the  loss Hessian, i.e., $\lmax{}({\nabla^2 f(\zz{t}))}$.} evolve with GD updates. Moreover, we investigate and clarify the relations between them.
Our characterizations demonstrate that the features of unstable convergence are in stark contrast with those of traditional stable convergence, suggesting that their optimization mechanisms are significantly different.
 
 \item In particular, the main features considered in this work provide alternative ways to identify unstable convergence in practice.
      
\end{enumerate} \autoref{fig:summary} provides a more technical overview of our main findings, along with their interpretations.

 \begin{figure}
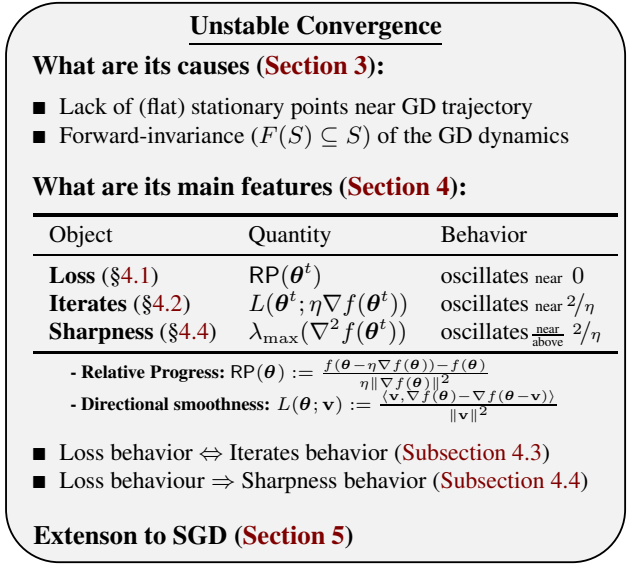

     \mdfsetup{% 
   middlelinewidth=.7pt,
   backgroundcolor=gray!10,
   roundcorner=20pt}
 \begin{mdframed}

\begin{center}
    \underline{\bf Unstable Convergence}
\end{center}
       \vspace{3pt}
       
       {\bf What are its causes (\autoref{sec:cause}):}
        \vspace{-3pt}
       \begin{small}
 \begin{list}{{\tiny $\blacksquare$}}{\leftmargin=0.5em}  \setlength{\itemsep}{-3pt}  
           \item Lack of (flat) stationary points near GD trajectory
        \item Forward-invariance ($F(S)\subseteq S)$ of the GD dynamics
       \end{list}
       \end{small}

       {\bf What are its main features (\autoref{sec:feature}):}
       \vspace{-10pt}
      \begin{center}
         \begin{small} 
\begin{tabular}{lll}
\toprule
Object & Quantity & Behavior   \\
\midrule
{\bf Loss} (\S\ref{sec:loss})  & $\rr{\zz{t}}$ & {\footnotesize oscillates}  {\tiny near } $0$ \\
{\bf Iterates} (\S\ref{sec:iterate}) & $\dir{\eta \nabla f(\zz{t})}{\zz{t}}$ & {\footnotesize oscillates}  {\tiny  near }$\nicefrac{2}{\eta}$   \\
{\bf Sharpness} (\S\ref{sec:sharp}) & $\lmax{}(\nabla^2 f(\zz{t}))$ &   {\footnotesize oscillates}{\tiny   $\frac{\text{near}}{\text{above}}$}  $\nicefrac{2}{\eta}$\\ 
\bottomrule
\end{tabular} 
\end{small}
\end{center}

       \vspace*{-3pt}
     
  \begin{scriptsize}
 \quad\quad{\bf  - Relative Progress:} $\rr{\zz{}}:=\frac{f(\zz{}-\eta \nabla f(\zz{}) )-f(\zz{})}{\eta \norm{\nabla f(\zz{})}^2}$

\quad\quad{\bf  -   Directional smoothness:}     $\dir{\vv}{\zz{}}:=\frac{\langle\vv,\nabla f(\zz{})- \nabla f(\zz{} -  \vv )\rangle}{\norm{\vv}^2}$
  \end{scriptsize}
  
  \vspace{-2pt}
  
  \begin{small}
 
 \begin{list}{{\tiny $\blacksquare$}}{\leftmargin=0.5em}
  \setlength{\itemsep}{-3pt}         
         \item  Loss behavior $\Leftrightarrow$ Iterates behavior (\autoref{sec:equiv}) 
         
   \item Loss behaviour   $\Rightarrow$ Sharpness behavior (\autoref{sec:sharp}) 
    
  \end{list}
  \end{small}
  {\bf Extenson to SGD (\autoref{sec:sgd})}
  
   \end{mdframed}
   \caption{Overview/summary of results.}\label{fig:summary}
 \end{figure}

\subsection{Related Work}
 
Under various contexts, several recents works have observed the unstable convergence phenomenon in training neural networks with (S)GD~\cite{wu2018sgd,xing2018walk,lewkowycz2020large,jastrzkebski2017three,jastrzkebski2018relation}. We refer readers to the related work section of \citet{Cohen2021} for greater context.

The unstable convergence phenomenon is first formally identified by \citet{Cohen2021}, and in their paper it is named \emph{edge of stability}.
More specifically, they observe a more refined version of the unstable convergence: when training a neural network with GD, the sharpness at the iterate goes beyond the threshold $\nicefrac{\eta}{2}$, and often saturates right at (or above) the threshold.
In \autoref{sec:sgd}, we will explore the relations between our main features and their observed phenomenon.

{\bf Concurrent works.} Recently,
\citet{ma2022multiscale} also investigate the causes of unstable convergence based on their empirical observations. 
Their main observation is that  unstable convergence might be due to the landscape of loss function where the loss grows slower than a quadratic near the local minima.
As we will see in \autoref{sec:cause_convergence}, their main finding is consistent with our explanation.
They also demonstrate through examples that such ``sub-quadratic'' growth near the minima is caused by the heterogeneity of data; see their Section 6 for details.

Another work by \citet{arora2022understanding} identifies a setting in which one can prove the unstable convergence phenomenon theoretically. 
More specifically, they show that the normalized gradient descent dynamics of form $\zz{t+1} = \zz{t} - \eta  \nabla f(\zz{t})/\norm{\nabla f(\zz{t})}$ can provably exhibit the unstable convergence phenomenon near the  minima under some suitable assumptions; see their Section 4 for details.

 \section{Unstable GD Can't Reach Stationary Points} \label{sec:suggest}

In this section, we build intuitions about what the unstable regime $\eta >\nicefrac{2}{L}$ suggests. 
First, note that the fixed points $\zz{\infty}$ of the GD dynamics $\zz{t+1} = \zz{t} - \eta \nabla f(\zz{t})$ are the stationary points, i.e., points such that $\nabla f(\zz{\infty})=0$.  
Hence, the GD dynamics will eventually approach one of the stationary points, and in order to understand the unstable regime, we first need to understand the behavior of the dynamics near the stationary points whose sharpness is greater than $\nicefrac{2}{\eta}$.

As a warm up, we first consider the simplest setting of quadratic costs where the sharpness is constant globally. 
We begin with the following well-known fact.

 \begin{fact} \label{fact:1}
 On a quadratic cost $f(\zz{})= \frac{1}{2}\zz{\top} P \zz{} +\vq^\top \zz{} +r$, GD  will diverge if 
any eigenvalue of $P$ exceeds the threshold $2/\eta$.  For convex quadratics,  this condition is ``iff.''\qqq
 \end{fact}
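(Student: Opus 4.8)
The plan is to diagonalize the quadratic and reduce GD to a collection of scalar recursions. Since $P$ is symmetric (it is the Hessian of $f$), write $P = U\Lambda U^\top$ with $\Lambda = \diag(\lambda_1,\dots,\lambda_d)$, and note that the unique stationary point (when $P$ is invertible) is $\zz{\infty} = -P^{-1}\vq$; when $P$ is singular, handle the null-space component separately. Substituting $\vy^{t} = U^\top(\zz{t} - \zz{\infty})$ turns the GD update $\zz{t+1} = \zz{t} - \eta\nabla f(\zz{t}) = \zz{t} - \eta(P\zz{t} + \vq)$ into the decoupled recursions $y^{t+1}_i = (1 - \eta\lambda_i)\,y^t_i$, hence $y^t_i = (1-\eta\lambda_i)^t\, y^0_i$.

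First I would treat the ``diverge'' direction. If some eigenvalue satisfies $\lambda_i > 2/\eta$, then $1 - \eta\lambda_i < -1$, so $|1-\eta\lambda_i| > 1$ and the coordinate $y^t_i$ grows geometrically in magnitude (for any generic initialization where $y^0_i \neq 0$; and one should remark that the set of initializations with $y^0_i = 0$ is a measure-zero affine subspace, so ``will diverge'' is understood generically, or alternatively one states it for all initializations off that subspace). Since $\|\zz{t} - \zz{\infty}\| = \|\vy^t\| \geq |y^t_i| \to \infty$, GD diverges. This already proves the first sentence of the Fact in full generality, using only that $f$ is quadratic with Hessian $P$.

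Next, for convex quadratics ($P \succeq 0$, so all $\lambda_i \geq 0$) I would prove the converse, i.e. that if every eigenvalue is $\le 2/\eta$ then GD does not diverge (and in fact converges, or at worst stays bounded). When $0 \le \lambda_i < 2/\eta$ we have $|1-\eta\lambda_i| < 1$, so $y^t_i \to 0$. The boundary cases $\lambda_i = 0$ (then $y^t_i$ is constant — this is the flat/affine direction, consistent with convexity allowing a continuum of minimizers or none, which is why one only claims non-divergence rather than convergence) and $\lambda_i = 2/\eta$ (then $y^t_i = (-1)^t y^0_i$ oscillates but stays bounded) should be noted explicitly. Collecting coordinates, $\|\vy^t\|$ stays bounded (and converges when no eigenvalue equals $0$ or $2/\eta$ and $P\succ 0$), so GD does not diverge; contrapositively, divergence forces some $\lambda_i > 2/\eta$, giving the ``iff.''

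The only genuine subtlety — and the step I would be most careful about — is the precise meaning of ``diverge'' at the threshold and in the presence of zero eigenvalues: strictly speaking GD can fail to converge (persistent oscillation when $\lambda_i = 2/\eta$) without the iterates blowing up, and it can stay put forever in flat directions. So I would fix the convention that ``diverges'' means $\|\zz{t}\| \to \infty$, under which the statement is exactly correct as written, and add a one-line remark that at the knife's-edge $\lambda_i = 2/\eta$ one gets bounded non-convergent oscillation rather than divergence, which is precisely the behavior that motivates the rest of the paper. Everything else is the routine linear-algebra bookkeeping of diagonalization, which I would not belabor.
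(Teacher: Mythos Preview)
Your argument is correct and is the standard diagonalization proof of this fact. Note, however, that the paper does not actually prove Fact~\ref{fact:1}: it is stated as a well-known fact and merely illustrated numerically in Example~\ref{ex:simplequad}; the same eigen-decomposition computation you use does appear later in Example~\ref{ex:dir_quad} (for the special case $f(\zz{})=\zz{\top}P\zz{}$), so your approach is entirely in line with the paper's own reasoning.
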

 
Below we quickly illustrate this fact through an example.

\begin{example}\label{ex:simplequad}
Consider optimizing a quadratic cost $f(\theta_1,\theta_2) = 20 \theta_1^2+ \theta_2^2$. Note that in this case $L=40$. 
Let us run GD on this cost with $\eta = 2/39,\ 2/40,\ 2/41$.

{\centering	
		\includegraphics[width=0.25\columnwidth]{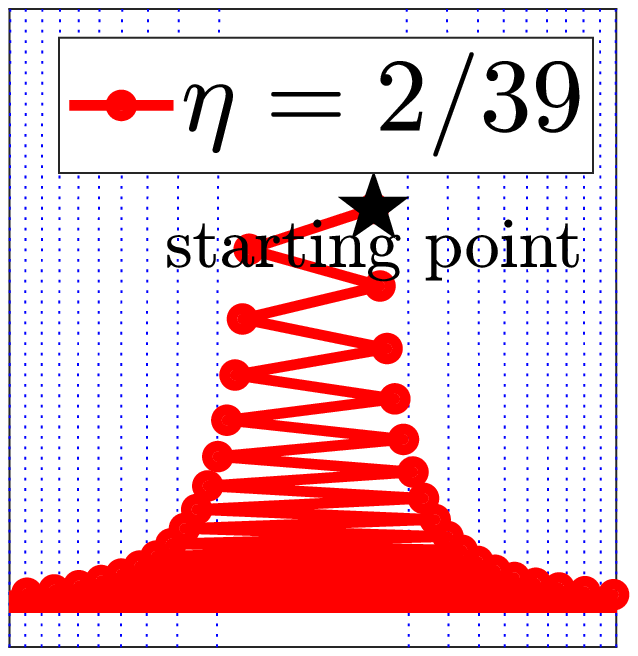}	\includegraphics[width=0.25\columnwidth]{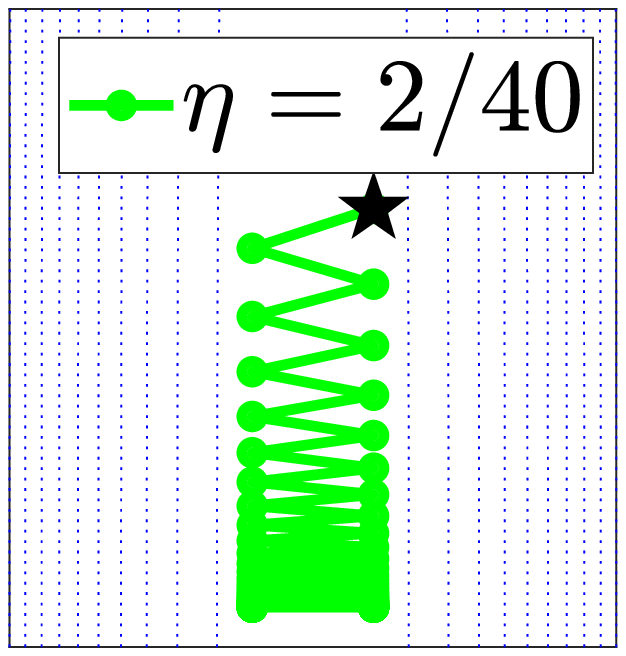}
		\includegraphics[width=0.25\columnwidth]{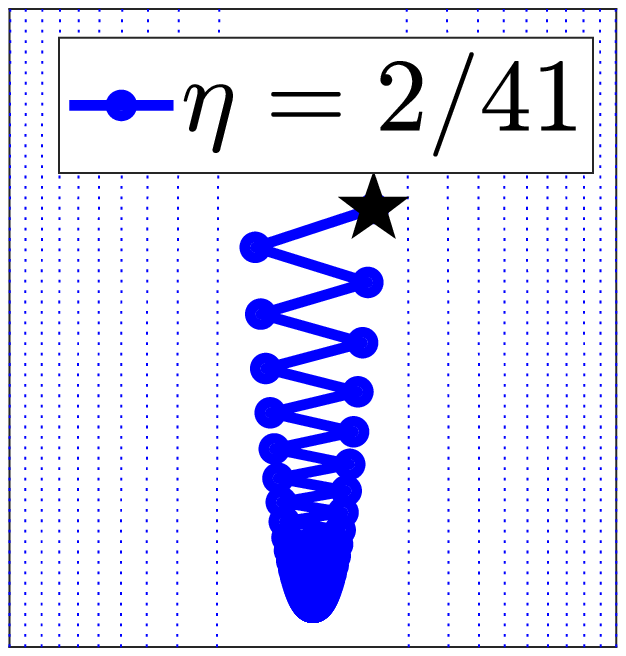}

	}
 
 As shown in the above plots, GD converges to the optimum if $\eta <2/L$ and it diverges  if $\eta >2/L$. \qqq
\end{example}

 Due to the above fact, one can build the following intuition: 
 when a stationary point has sharpness greater than $\nicefrac{2}{\eta}$, the GD dynamics cannot converge to the stationary point.
 
 We first formalize this intuition.
 In particular, we show that 
 GD cannot converge to a stationary point that has sharpness greater than $2/\eta$.
We make the following assumptions  about the spectrum of Hessian at stationary points and the non-degeneracy of the GD dynamics.

\begin{assumption}\label{as:gen}
Let $F(\zz{})= \zz{}- \eta \nabla f(\zz{})$ and assume that for any subset $S$ of measure zero, $F^{-1}(S)$ is of measure zero.
Moreover, each  stationary point $\pp$  satisfies $\frac{1}{\eta} \not\in \lambda (\nabla^2 f(\pp))$, where $\lambda (\nabla^2 f(\pp))$ denotes the set of eigenvalues of the Hessian of $f$ at $\pp$. 
\end{assumption}

\begin{theorem} \label{thm:master}
For a given subset $\Xc$ of the domain of parameter $\zz{}$, assume  that $f$ is $C^2$ in $\Xc$. 
Suppose that for each stationary point $\pp\in \Xc$, it holds that either $\lambda_{\min}(\nabla^2 f(\pp))<0$  or  $\lambda _{\max}(\nabla^2 f(\pp)) > \frac{2}{\eta}$.
Then under  \autoref{as:gen}, 
 there is a measure-zero subset $\Nc$ s.t. for all initializations $\zz{0}\in \Xc\setminus \Nc$, the GD dynamics $\zz{t+1} = \zz{t} - \eta \nabla f(\zz{t})$  do not converge to any of the stationary points in $\Xc$.
\end{theorem}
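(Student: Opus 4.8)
The plan is to run the center-stable manifold argument that underlies the ``first-order methods avoid strict saddles'' line of work, but driven by the weaker spectral hypothesis in the statement. Write $F(\zz{})=\zz{}-\eta\nabla f(\zz{})$, so that on $\Xc$ the map $F$ is $C^1$ and GD is the fixed-point iteration $\zz{t+1}=F(\zz{t})$. At a stationary point $\pp\in\Xc$ we have $DF(\pp)=\I-\eta\nabla^2 f(\pp)$, whose eigenvalues are precisely $1-\eta\lambda$ as $\lambda$ ranges over $\lambda(\nabla^2 f(\pp))$. The dichotomy in the statement forces a modulus-$>1$ eigenvalue of $DF(\pp)$: if $\lambda_{\min}(\nabla^2 f(\pp))<0$ then $1-\eta\lambda_{\min}(\nabla^2 f(\pp))>1$, and if $\lambda_{\max}(\nabla^2 f(\pp))>2/\eta$ then $1-\eta\lambda_{\max}(\nabla^2 f(\pp))<-1$; either way the unstable eigenspace $E^u_\pp$ of $DF(\pp)$ is nontrivial. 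The second clause of \autoref{as:gen}, namely $\tfrac1\eta\notin\lambda(\nabla^2 f(\pp))$, is equivalent to $0\notin\lambda(DF(\pp))$, so $DF(\pp)$ is invertible and (by the inverse function theorem) $F$ is a $C^1$ local diffeomorphism near $\pp$ --- exactly the regularity the center-stable manifold theorem requires.

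First I would apply the Center-Stable Manifold Theorem at each stationary point $\pp\in\Xc$: it yields an open neighborhood $B_\pp\ni\pp$ and an embedded $C^1$ submanifold $\loc(\pp)\subseteq B_\pp$ of dimension $\dim E^s_\pp+\dim E^c_\pp=d-\dim E^u_\pp\le d-1$, with the property that \emph{any} point whose entire forward $F$-orbit stays in $B_\pp$ must lie in $\loc(\pp)$; since $\dim\loc(\pp)\le d-1$, it is Lebesgue-null in $\R^d$. Next I would convert convergence into manifold membership: if the GD trajectory from $\zz{0}\in\Xc$ satisfies $\zz{t}\to\pp$, then $\zz{t}\in B_\pp$ for all $t\ge T$ with $T$ finite, so the forward orbit of $\zz{T}$ stays in $B_\pp$, hence $\zz{T}\in\loc(\pp)$ and therefore $\zz{0}\in F^{-T}(\loc(\pp))$. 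Thus the set of $\zz{0}\in\Xc$ converging to $\pp$ is contained in $\bigcup_{T\ge 0}F^{-T}(\loc(\pp))$. By the non-degeneracy clause of \autoref{as:gen}, $F^{-1}$ sends null sets to null sets, so by induction each $F^{-T}(\loc(\pp))$ is null and this countable union is null.

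It remains to take the union over all stationary points of $\Xc$, of which there may be uncountably many. Here I would invoke second countability (the Lindel\"of property) of $\R^d$: the cover $\{B_\pp:\pp\text{ stationary in }\Xc\}$ of the stationary set admits a countable subcover $\{B_{\pp_i}\}_{i\in\N}$, and any trajectory converging to some stationary point in $\Xc$ converges to a point lying in some $B_{\pp_i}$, hence is eventually trapped in $B_{\pp_i}$ and so, by the argument above, enters $\loc(\pp_i)$ at some finite time. The exceptional set is then
\[
  \Nc \;:=\; \bigcup_{i\in\N}\;\bigcup_{T\ge 0} F^{-T}\!\bigl(\loc(\pp_i)\bigr),
\]
a countable union of null sets, hence null; and for every $\zz{0}\in\Xc\setminus\Nc$ the GD dynamics do not converge to any stationary point in $\Xc$.

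The main obstacle I anticipate is the bookkeeping in this last step: the local manifolds only capture orbits that remain in $B_{\pp_i}$ \emph{for all} forward time, so one must argue carefully that a convergent orbit, after a finite burn-in, really is confined to the correct $B_{\pp_i}$, and that the Lindel\"of reduction is legitimate even though the $B_{\pp_i}$ are pinned to different limit points. A minor related point is that $f$ is only $C^2$ on $\Xc$ (so $F$ is merely $C^1$ there) and orbits may leave $\Xc$; but since we only track convergence to stationary points \emph{inside} $\Xc$, any such orbit eventually lies in a small neighborhood of its limit contained in $\Xc$, so the local manifold machinery applies without needing global regularity. One should also record that the hypothesis ``$\lambda_{\min}<0$ or $\lambda_{\max}>2/\eta$'' is used \emph{only} to guarantee $\dim E^u_\pp\ge 1$, which the eigenvalue computation above confirms.
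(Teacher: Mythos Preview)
Your proposal is correct and follows essentially the same route as the paper's proof: both invoke the center--stable manifold theorem after verifying that $F$ is a $C^1$ local diffeomorphism at each stationary point (via $\tfrac{1}{\eta}\notin\lambda(\nabla^2 f(\pp))$ and the inverse function theorem), use the eigenvalue dichotomy exactly as you did to force $\dim E^u_\pp\ge 1$, pass to a countable subcover $\{B_{\pp_i}\}$ by Lindel\"of, and define $\Nc=\bigcup_i\bigcup_t F^{-t}(\loc(\pp_i))$, which is null because $F^{-1}$ preserves null sets. The bookkeeping concern you flag is handled in the paper precisely as you outline---an orbit converging to some stationary point in $\Xc$ eventually lies in one of the $B_{\pp_i}$ for all large $t$, hence enters $\loc(\pp_i)$---so there is no gap.
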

\begin{proof}[{\bf Proof of \autoref{thm:master}}]
The proof is inspired by those of \cite{lee2016gradient,panageas2017gradient}.
First, we recall Stable Manifold Theorem~\citep[ Thm III.7]{shub2013global}.
   \begin{lemma}  \label{thm:stable}
 Let $\vze$ be a fixed point for the $C^r$ local diffeomorphism $h: U\to \R^n$ where $U$ is an open neighborhood of $\vze$ in $\R^n$ and $r\geq 1$.
    Let $E^{\sf s} \oplus E^{\sf c} \oplus E^{\sf u}$ be the invariant spliiting of $\R^n$ into the generalized eigenspaces of $Dh(\vze)$ corresponding to eigenvalues of absolute value less than one, equal to one, and greater than one. 
    To the $Dh(\vze)$-invariant subspace $E^{\sf s} \oplus E^{\sf c}$ there is an associated local $h$-invariant $C^r$ embedded disc $\loc$ of dimension $\dim(E^{\sf s} \oplus E^{\sf c})$ and ball $B$ around $\vze$ such that $ h(\loc)\cap B \subset \loc$. Moreover, if $h^n(\vx)\in B$ for all $n\geq 0$, then $\vx \in \loc$. 
    \end{lemma}
To apply \autoref{thm:stable}, we first show that the GD dynamics $F$  is a local diffeomorphism at each stationary point $\pp$ satisfying $\frac{1}{\eta}\notin \lambda(\nabla^2 f(\pp))$. This follows from the inverse function theorem: (i) Note that $F$ is a $C^1$ vector field since $f$ is $C^2$, (ii) the Jacobian of $F$ is equal to $DF(\pp) = I-\eta \nabla^2f(\pp)$, and since $\frac{1}{\eta}\notin \lambda(\nabla^2 f(\pp))$, the Jacobian is invertible.
Hence, by inverse function theorem, we conclude that $F$ is a local diffeomorphism around $\pp$. 

Hence for each stationary point $\pp$ satisfying $1/\eta\notin \lambda(\nabla^2 f(\pp))$, we can apply \autoref{thm:stable} at $\pp$. Let $B_\pp$ be the open ball due to \autoref{thm:stable}.
Let $\Sta$ be the set of stationary points. Consider the following open cover
    \begin{align} \label{cover}
       \Stap :=\bigcup_{\substack{\pp: \text{ stationary point}\\
       \frac{1}{\eta}\notin \lambda(\nabla^2 f(\pp))}} B_\pp\,.
    \end{align} 
    Then from \autoref{as:gen}, it follows that $\Sta \subset \Stap$ and hence $\Stap$ is an open cover of $\Sta$.
    Thus, Lindel\"of's lemma guarantees that there exists a countable subcover of $\Stap$, i.e., there exist $\pp_1,\pp_2,\dots$ s.t. $\Stap=\cup_{i=1}^\infty B_{\pp_i}$.  If GD converges to a stationary point $\pp$, there must exist $t_0$ and $i$ such that $F^{t}(\pp_0)\in B_{\pp_i}$ for all $t\geq t_0$. 
    From \autoref{thm:stable}, we conclude that $F^{t}(\zz{}_0)\in\loc(\pp_i)$.
  In other words,  we have $\zz{}_0 \in F^{-t}(\loc(\pp_i))$ for all $t\geq t_0$.  Hence the set of initial points in $\Xc$ for which GD converges to a stationary point is a subset of 
 \begin{align*}
     \Nc := \bigcup_{i=1}^\infty \bigcup_{t=0}^\infty F^{-t}(\loc(\pp_i)) \,.
 \end{align*}
  Now from the assumption that either  $\lambda_{\min}(\nabla^2 f(\pp))<0$  or  $\lambda _{\max}(\nabla^2 f(\pp)) > \frac{2}{\eta} $, 
 it follows that $I-\eta \nabla^2 f(\pp)$ has an eigenvalue whose absolute value is greater than $1$.
 Hence, for each stationary point $\pp$, $\dim( E^{\sf u})\geq 1$.
   This implies that each $\loc(\pp)$ has measure zero, and from the assumption that $F^{-1}(\Xc)$ is of measure zero if $\Xc$ is of measure zero, it holds that each $F^{-t}(\loc(\pp_i))$ is of measure zero.
   Thus, being a countable union of measure zero sets, $\Nc$ is measure zero.
   It follows that for initialization $\zz{}_0 \in \Xc\setminus\Nc$, the GD dynamics never converge to a stationary point in $\Xc$.  \end{proof}

\begin{remark}
Note that \autoref{thm:master} applies to the case when  stationary points are not isolated. 
Moreover, the condition that every stationary point $\pp$  satisfies $\frac{1}{\eta} \not\in \lambda (\nabla^2 f(\pp))$ can be relaxed to the condition that the open cover $\Stap$ in \eqref{cover} covers the entire set of stationary points $\Sta$. 
\end{remark}

The main takeaway of  \autoref{thm:master} is that for almost all initializations, GD cannot converge to the stationary point whose sharpness is larger than $2/\eta$ even when there is only a single eigenvector whose eigenvalue exceeds the threshold $2/\eta$.
Having this intuition, we next discuss how ``convergence'' could happen under the unstable regime.

\section{How Can Unstable GD ``Converge''?} \label{sec:cause}
 
In the previous section, we saw that when stationary points have large sharpness relative to the step size, GD cannot converge to those stationary points.
However, as we saw in \autoref{fig:1},  GD can still ``converge'' under the unstable regime; GD still manages to (non-monotonically) decrease the training loss in the long run.
In this section, we understand this bizzare co-occurrence.
We first discuss some possible causes for the unstable regime.

\subsection{What Causes the Unstable Regime}
\label{sec:cause_instability}

 One possible cause for the unstable regime is that the landscape has only ``trivial'' stationary points; we will formalize the meaning of ``trivial'' shortly. 
 This situation turns out to be quite common for neural networks as illustrated by the following result. 

\begin{proposition}\label{prop:trivial}
Assume the loss of neural network parametrized $\zz{}$ contains a weight decay term as follows,
\begin{align*}
    \ell(\zz{}) = \frac{1}{n}\sum_{i=1}^n f(\boldsymbol{x}_i, \zz{}) + \gamma \|\zz{}\|^2_2.
\end{align*}
If we partition the network parameter $\zz{} = [\boldsymbol{\xi}; \boldsymbol{\zeta}]$ such that a subset of the network parameters $\boldsymbol{\zeta}$ is positive homogeneous, i.e. for any input data $x_i$ and positive number $c > 0$
\begin{align*}
    f(\boldsymbol{x}_i, [\boldsymbol{\xi}, \boldsymbol{\zeta}]) =  f(x_i, [\boldsymbol{\xi}, c\boldsymbol{\zeta}]),
\end{align*}
Then the loss $\ell(\zz{})$ has no stationary point if $\boldsymbol{\zeta} \neq 0$.
\end{proposition}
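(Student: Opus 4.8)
The plan is to exploit positive homogeneity in $\boldsymbol{\zeta}$ to produce a direction along which the loss strictly decreases, contradicting stationarity whenever $\boldsymbol{\zeta}\neq 0$. First I would observe what positive homogeneity of degree $0$ in $\boldsymbol{\zeta}$ implies for the data-fitting term: since $f(\boldsymbol{x}_i,[\boldsymbol{\xi},c\boldsymbol{\zeta}])=f(\boldsymbol{x}_i,[\boldsymbol{\xi},\boldsymbol{\zeta}])$ for all $c>0$, the map $c\mapsto \frac1n\sum_i f(\boldsymbol{x}_i,[\boldsymbol{\xi},c\boldsymbol{\zeta}])$ is constant. Differentiating in $c$ at $c=1$ gives the Euler-type identity $\langle \nabla_{\boldsymbol{\zeta}}\big(\frac1n\sum_i f(\boldsymbol{x}_i,\zz{})\big),\boldsymbol{\zeta}\rangle = 0$. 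Hence the gradient of the data term in the $\boldsymbol{\zeta}$-block is always orthogonal to $\boldsymbol{\zeta}$.

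Next I would compute the $\boldsymbol{\zeta}$-block of $\nabla\ell$: it equals $\nabla_{\boldsymbol{\zeta}}\big(\frac1n\sum_i f\big) + 2\gamma\,\boldsymbol{\zeta}$. Taking the inner product with $\boldsymbol{\zeta}$ and using the orthogonality just established, $\langle \nabla_{\boldsymbol{\zeta}}\ell(\zz{}),\boldsymbol{\zeta}\rangle = 2\gamma\|\boldsymbol{\zeta}\|_2^2$. If $\zz{}$ were a stationary point with $\boldsymbol{\zeta}\neq 0$, then $\nabla_{\boldsymbol{\zeta}}\ell(\zz{})=0$, forcing $2\gamma\|\boldsymbol{\zeta}\|_2^2=0$, which is impossible since $\gamma>0$ and $\boldsymbol{\zeta}\neq 0$. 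This gives the claim directly. (Equivalently, and perhaps more transparently for the reader: rescaling $\boldsymbol{\zeta}\mapsto c\boldsymbol{\zeta}$ leaves the data term unchanged but multiplies $\gamma\|\boldsymbol{\zeta}\|^2$ by $c^2$, so pushing $c\to 0$ strictly decreases $\ell$ whenever $\boldsymbol{\zeta}\neq 0$ and $\gamma>0$; thus no point with $\boldsymbol{\zeta}\neq 0$ can even be a local minimum, let alone stationary.)

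I expect the main subtlety — really the only place one must be careful — is the differentiability/regularity issue: positive homogeneity as written holds for all $c>0$, but neural network losses with ReLU activations are only piecewise smooth, so $c\mapsto f(\boldsymbol{x}_i,[\boldsymbol{\xi},c\boldsymbol{\zeta}])$ need not be differentiable at every $c$. The clean fix is to note the function is locally Lipschitz and constant in $c$, so its derivative (where it exists, which is a.e., and in particular one-sided derivatives exist everywhere) vanishes; alternatively, one works with the finite-difference/rescaling argument above, which requires no differentiability of $f$ at all and only uses that $\ell$ is being evaluated, not differentiated, in the $\boldsymbol{\zeta}$-direction. Since the proposition speaks of a "stationary point" (hence implicitly assumes $\ell$ differentiable there), the Euler-identity route is adequate; I would present the rescaling argument as the conceptual reason and the gradient computation as the formal one, flagging that the two align.
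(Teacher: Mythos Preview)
Your proposal is correct and follows essentially the same route as the paper: use positive homogeneity to obtain the Euler-type orthogonality $\langle \nabla_{\boldsymbol{\zeta}} f,\boldsymbol{\zeta}\rangle=0$, then observe that the weight-decay contribution $2\gamma\boldsymbol{\zeta}$ has nonzero inner product with $\boldsymbol{\zeta}$, so the full $\boldsymbol{\zeta}$-gradient cannot vanish. Your write-up is in fact more explicit than the paper's (which states the conclusion without spelling out the inner-product step), and your added rescaling argument and regularity remarks are sound supplements.
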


\begin{proof}[{\bf Proof of \autoref{prop:trivial}}]
This statement follows by a simple observation that from positive homogeneity, 
$$\left \langle \nabla_{\boldsymbol{\zeta}} f(\boldsymbol{x}_i, [\boldsymbol{\xi}, \boldsymbol{\zeta}]), \boldsymbol{\zeta} \right\rangle = 0. $$
Therefore, if $\nabla_{\boldsymbol{\zeta}} \gamma \|\zz{}\|^2_2 \neq 0$, we have  
$$ \nabla_{\boldsymbol{\zeta}} f(\boldsymbol{x}_i, [\boldsymbol{\xi}, \boldsymbol{\zeta}]) + \nabla_{\boldsymbol{\zeta}} \gamma \|\zz{}\|^2_2 \neq 0,$$
which concludes the proof.
\end{proof}

Notice that the positive homogeneity parameters exist in many networks such as ResNet or Transformer when normalization layers exist ($\boldsymbol{a}_{L+1} = \boldsymbol{a}_L / \|\boldsymbol{a}_L\| $, where $\boldsymbol{a}_L$ denote the input to layer $L$, and $\| \cdot \|$ denotes a norm of choice).

Note that in practical settings, \autoref{prop:trivial} also suggests that there could be lack of flat minima near the GD trajectory. In the above example of ResNet or Transformer, the  networks often add a small $\epsilon$ term to the normalization $\boldsymbol{a}_{L+1} = \boldsymbol{a}_L / (\epsilon + \|\boldsymbol{a}_L\|) $ to avoid the loss being undefined at $\boldsymbol{a}_L = 0$. However, the stationary points only exist when $\|\boldsymbol{a}_L\| \approx \epsilon$, in which case the sharpness of the stationary point is very large (on the order of $\sim 1/\eps$). 

In fact, it has been extensively observed in the literature that the sharpness around GD with practical stepsize choices often goes beyond the threshold $2/\eta$.
This claim is verified through a comprehensive set of experiments and called
{\bf progressive sharpening} in \cite{Cohen2021}; we refer readers to their Section 3.1 for details.
For instance, the sharpness curve in \autoref{fig:1} shows this phenomenon.
Moreover, a similar phenomenon was observed in  \cite{wu2018sgd}, and they speculated that the density of sharp minima is much larger than the density of flat minima in the neural network landscape. See their Section 4.1 for details.
 
 We summarize our discussion regarding the causes of unstable regime as follows.

\begin{takeaway}
For practical stepisze choices, lack of (flat) non-trivial stationary points near the GD trajectory can cause GD to enter the unstable regime.
\end{takeaway}

 \subsection{Causes for Convergence}
 \label{sec:cause_convergence} 
 
 As we discussed in \autoref{fact:1}, for quadratic costs (or more generally for most convex costs), GD being in the unstable regime implies that GD will diverge entirely. 
 However, as demonstrated by \cite{Cohen2021} through a comprehensive set of experiments, in neural network training, this situation no longer holds.
 In this section, we discuss how in the unstable regime ``convergence'' could happen through examples.
As a warm-up, let us revisit the quadratic cost considered in \autoref{fact:1}, but this time with some modifications.

\begin{example}[``Flattened'' quadratic cost]  \label{ex:falt}
For the same quadratic cost as in \autoref{fact:1},  we chose the same diverging step size $\eta = 2/39 > 2/L$, but this time we change the cost a bit by applying $\tanh(\cdot )$ on top of the quadratic cost.
More formally, we consider the cost $  \tanh(20\cdot \theta_1^2 + \theta_2^2)$.
Due to the fact $\tanh\approx x$ near zero, this transformation wouldn't change the geometry near the global minimum.
We run GD on the modified cost, and the result looks as follows (we include the result for the original quadratic cost on the left for comparison):
	
{\centering	
		\includegraphics[width=0.49\columnwidth]{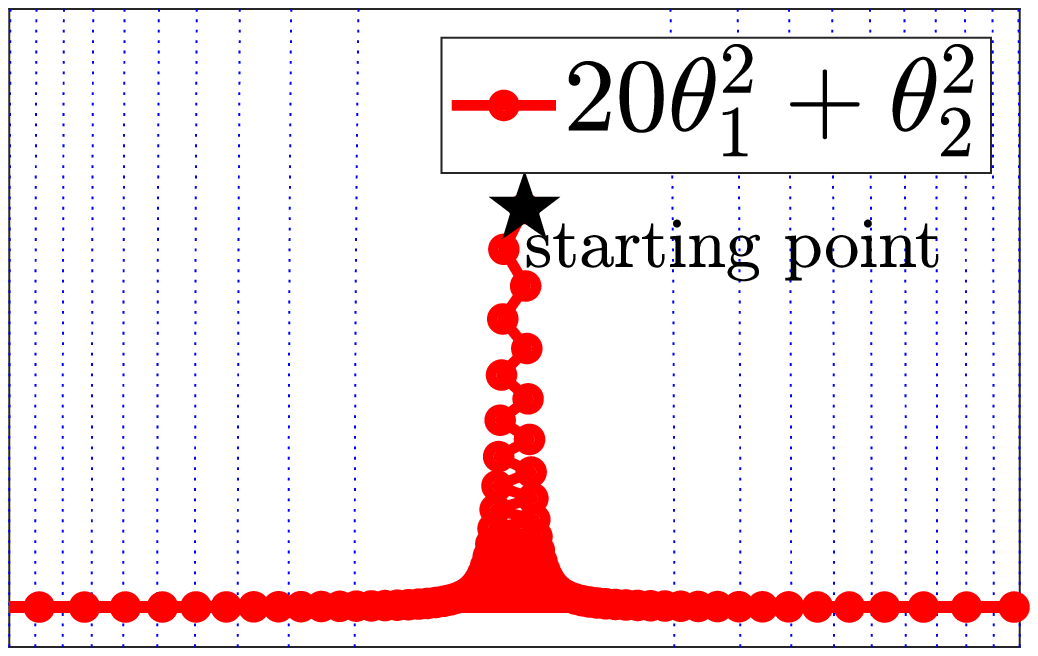} 	\includegraphics[width=0.49\columnwidth]{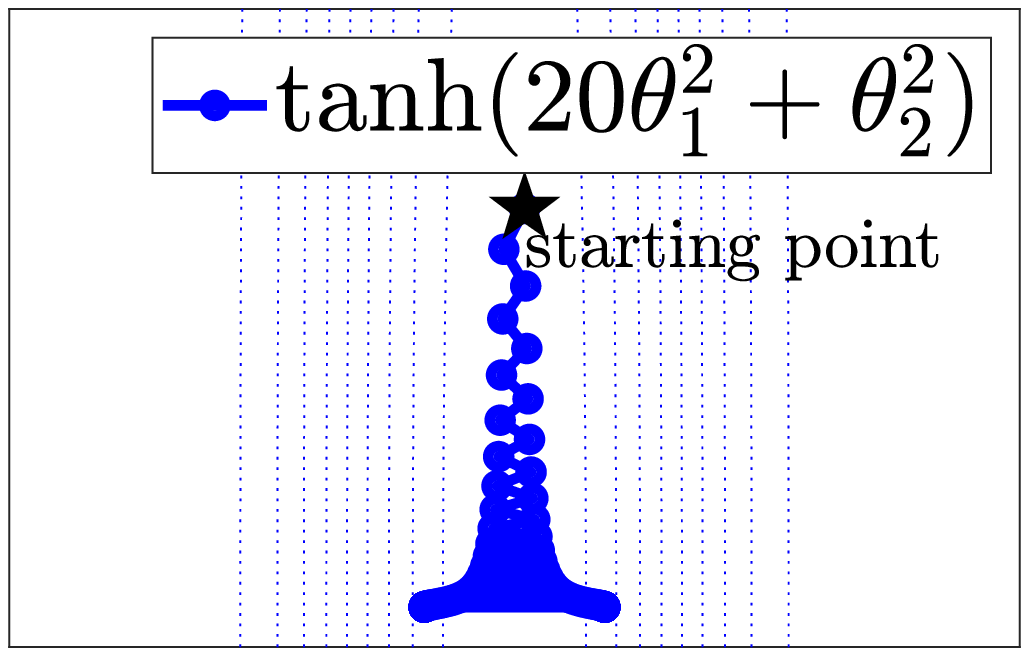}
	}
 
As one can see from the above plot, for the transformed cost, GD does not diverge in the unstable regime.
\qqq
\end{example} 
 
 The above toy example illustrates that indeed for nonconvex costs, being in the unstable regime does not necessarily imply complete divergence. 
  For the above example, this was possible because of $\tanh(\cdot)$, which `flattens'' out the landscape of the quadratic cost away from the minimum.

 More formally, let us denote the GD dynamics by $F(\zz{}):= \zz{}- \eta \nabla f(\zz{})$.
 Then the role of $\tanh(\cdot)$ in the above example is that it creates a compact subset near the minimum that is \emph{forward-invariant}: we say $S$ is forward-invariant with respect to the dynamics $F$ if $F(S)\subseteq S$. 
 Because the gradient of $\tan(\text{quadratic})$ vanishes as the point gets farther away from the minimum, there exists a forward-invariant compact subset $\Xc$ near the minimum.  
  
  \begin{remark}
 In a very recent concurrent work by \citep{ma2022multiscale}, this phenomenon is discussed in a more principled manner using the \emph{subquadratic growth property}.
 More specifically, they observed that for practical neural network settings, the loss landscape near the minima exhibits growth that is slower that quadratic, in which case the GD dynamics do not diverge entirely even in the unstable regime.
 See their Section 4 for details. 
  \end{remark}

 We demonstrate this point for neural network examples. 
 We first consider the simplest neural network example, namely a single hidden neuron network.

\begin{example}[Single neuron networks] \label{ex:tanh_single}
We consider a trivial task of fitting the data $(\textcolor{black}{1},\textcolor{black}{0})$ with a single hidden neuron neural network.
Formally, we consider two types of networks:
\begin{compactitem}
     \item linear network:  $f(\theta_1,\theta_2) = (\theta_1\cdot (\textcolor{black}{1}\cdot\theta_2)-\textcolor{black}{0})^2$.
     \item $\tanh$ network:  $f(\theta_1,\theta_2) = (\theta_1\cdot \tanh(\textcolor{black}{1}\cdot\theta_2)-\textcolor{black}{0})^2$.
\end{compactitem} 
We initialize both networks at  	$\zz{0} =(13, 0.01)$ choose step size $\eta=2/150$ to train them.

{\centering

\includegraphics[width=0.49\columnwidth]{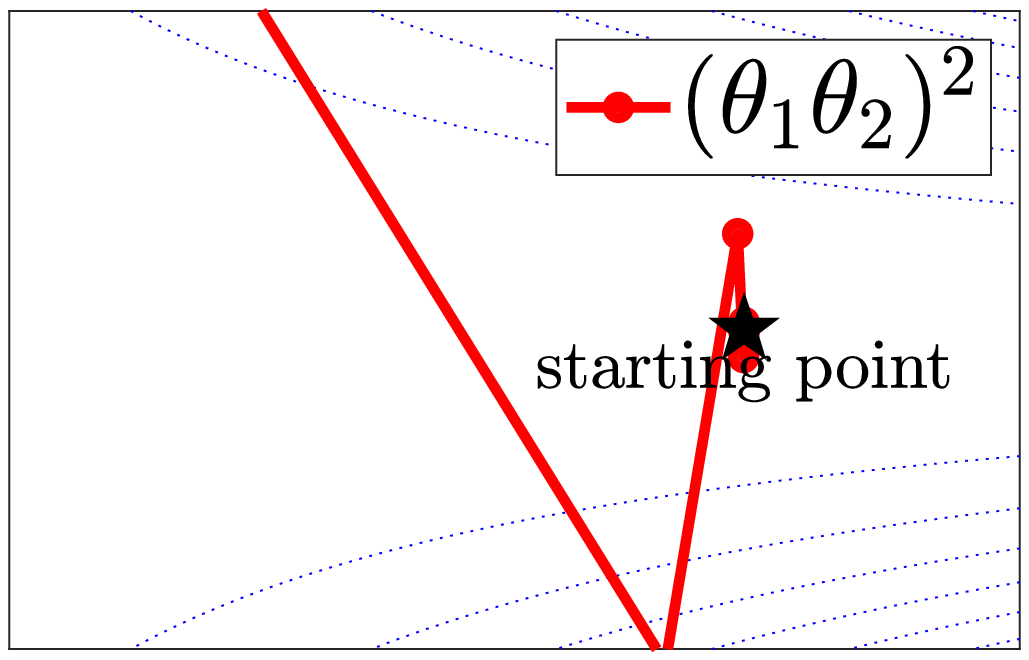} 	\includegraphics[width=0.49\columnwidth]{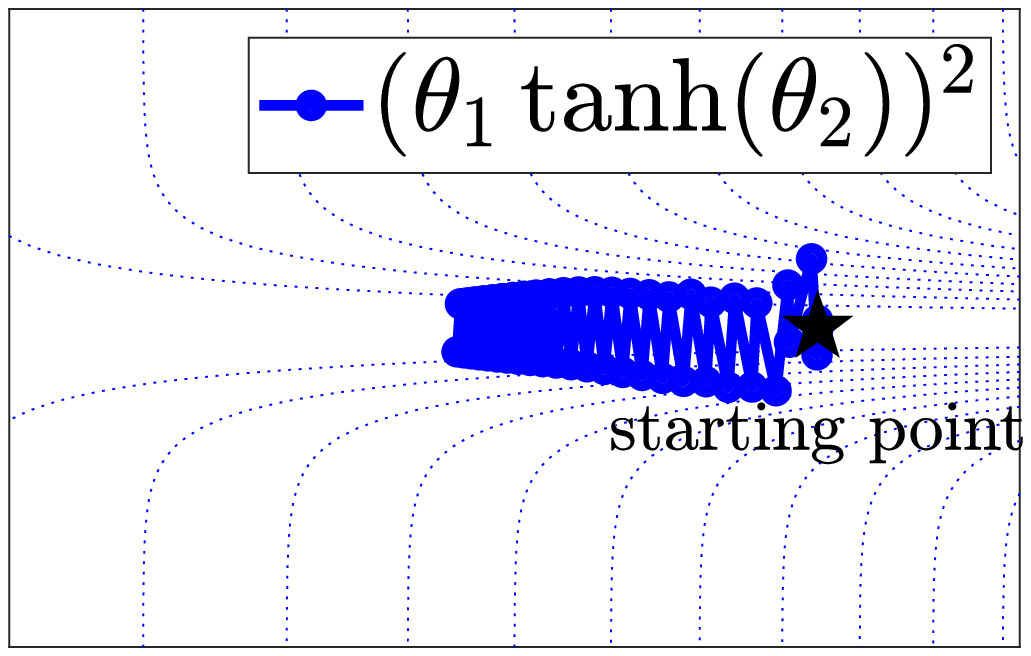}

 }
 
 As one can see from the above plots, for a linear network, the iterate quickly diverges, while for the $\tanh$ network, the iterate does not diverge and converges to a minimum (whose sharpness is indeed approximately equal to
 $2/\eta$). 
 \qqq
\end{example} 

\autoref{ex:tanh_single} illustrates that the use of activation function like $\tanh$ can create a compact forward-invariant subset near the minima, which helps GD not diverge in the unstable regime. 
In fact, the above example suggests that GD indeed exhibits some convergence behaviour where while being in the unstable regime, GD travels along the valley of minima until it finds a flat enough minimum where it can stabilize.

We now consider  more practical neural network examples inspired by the settings considered in \cite{Cohen2021}.

\begin{experiment}[CIFAR-10 experiment] \label{ex:cifar}
For this example, we follow the setting of the main experiment  \cite{Cohen2021} in their Section 3.
Specifically, we use (full-batch) GD to train
a neural network on  $5,000$ examples from CIFAR-10 with the CrossEntropy loss, and the network is a fully-connected architecture with two hidden layers
of width $200$.
Under this common setting, we consider three types of networks: (i) linear network without activations; (ii) $\tanh$ activations; (ii) ReLU activations.
We choose the step size $\eta=2/30$ and the results are as follows:

   {\centering	
		\includegraphics[width=0.49\columnwidth]{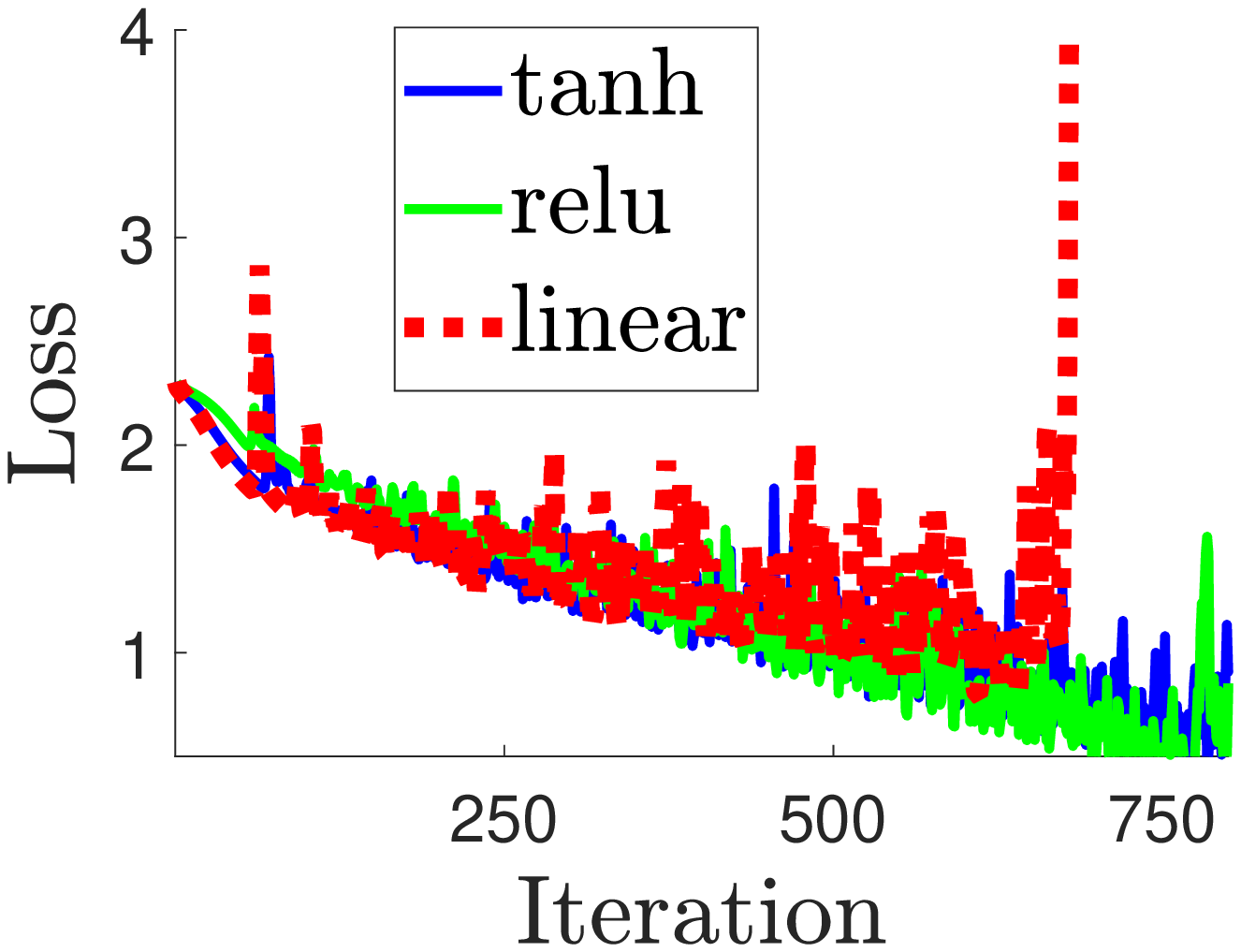} 	\includegraphics[width=0.49\columnwidth]{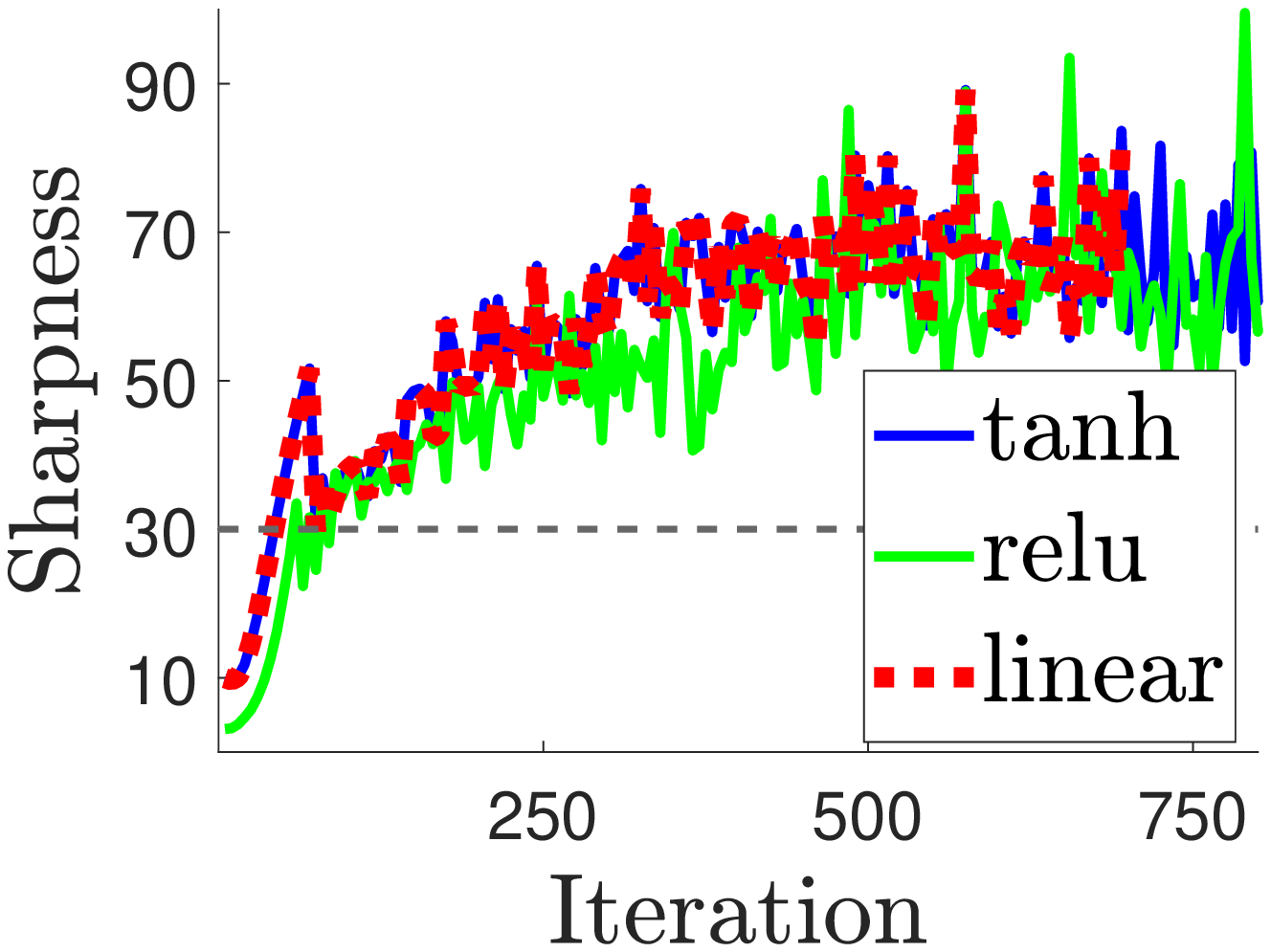}
		
	}
 
 As one can see from the above plot, GD converges for the networks with activation functions, while GD diverges without activation functions. 
 \qqq
\end{experiment}

We summarize our discussion regarding the causes for convergence as follows.

\begin{takeaway}
Ingredients of neural networks such as activation functions create a compact forward-invariant set near the minima, which helps GD (non-monotonically) converge in the unstable regime. 
\end{takeaway}

In this section, we have discussed the causes of unstable convergence and explain how the intuitions differ from those of  conventional convex optimization.
We next move on to study the main characteristics of unstable convergence.
For instance, we observe that under the unstable convergence phenomenon, the loss is very non-monotonic.
Can we understand the behavior of loss in a more principled way? 
 
 \section{Characteristics of the Unstable Convergence}
 \label{sec:feature}
 
 In this section,  we aim to quantify \uc{} through several  quantities that can be computed during the training.
 In particular, we will characterize the unstable convergence in terms of the loss behavior and the iterate behavior. We will later demonstrate that the two different behaviors are interconnected with each other.

 \subsection{Characteristics in Loss Behavior}
 
\label{sec:loss}
We first investigate what happens to the loss under unstable convergence.
 As a warm-up, we first consider the loss behavior under {\bf stable} convergence.
 
 \subsubsection{Warm-up: The Stable Regime}

Recall from the descent lemma \eqref{descent} that when GD is in the stable regime, then we have $f(\zz{t+1})-f(\zz{t}) \leq  - c \eta  \norm{\nabla f(\zz{t})}^2$ for some constant $c>0$.
Putting it differently, we have 
\begin{align*}
\frac{f(\zz{t+1})-f(\zz{t})}{\eta \norm{\nabla f(\zz{t})}^2} \leq -\mathrm{const.}
\end{align*}
Let us give the ratio on the LHS a name for convenience:
\begin{definition}[Relative progress ratio] We define \label{def:rp}
\begin{align*}
    \rr{\zz{}}:=\frac{f(\zz{}-\eta \nabla f(\zz{}) )-f(\zz{})}{\eta \norm{\nabla f(\zz{})}^2}\,.
\end{align*}
\end{definition}

Let us revisit \autoref{ex:cifar} and verify that for smaller step sizes the \rp{} ratio is indeed a negative number.

\begin{experiment}[CIFAR-10; stable regime] \label{exp:stable}
We use the same setting as \autoref{ex:cifar}, which follows the setting of the main experiment in \cite{Cohen2021}. For activations, we choose $\tanh$ following  \cite{Cohen2021}.
We choose much  smaller step sizes  so that GD is in the stable regime.
We plot the loss and the \rp{} ratio until the training accuracy hits $95\%$.

{\centering 

\includegraphics[width=0.45\columnwidth]{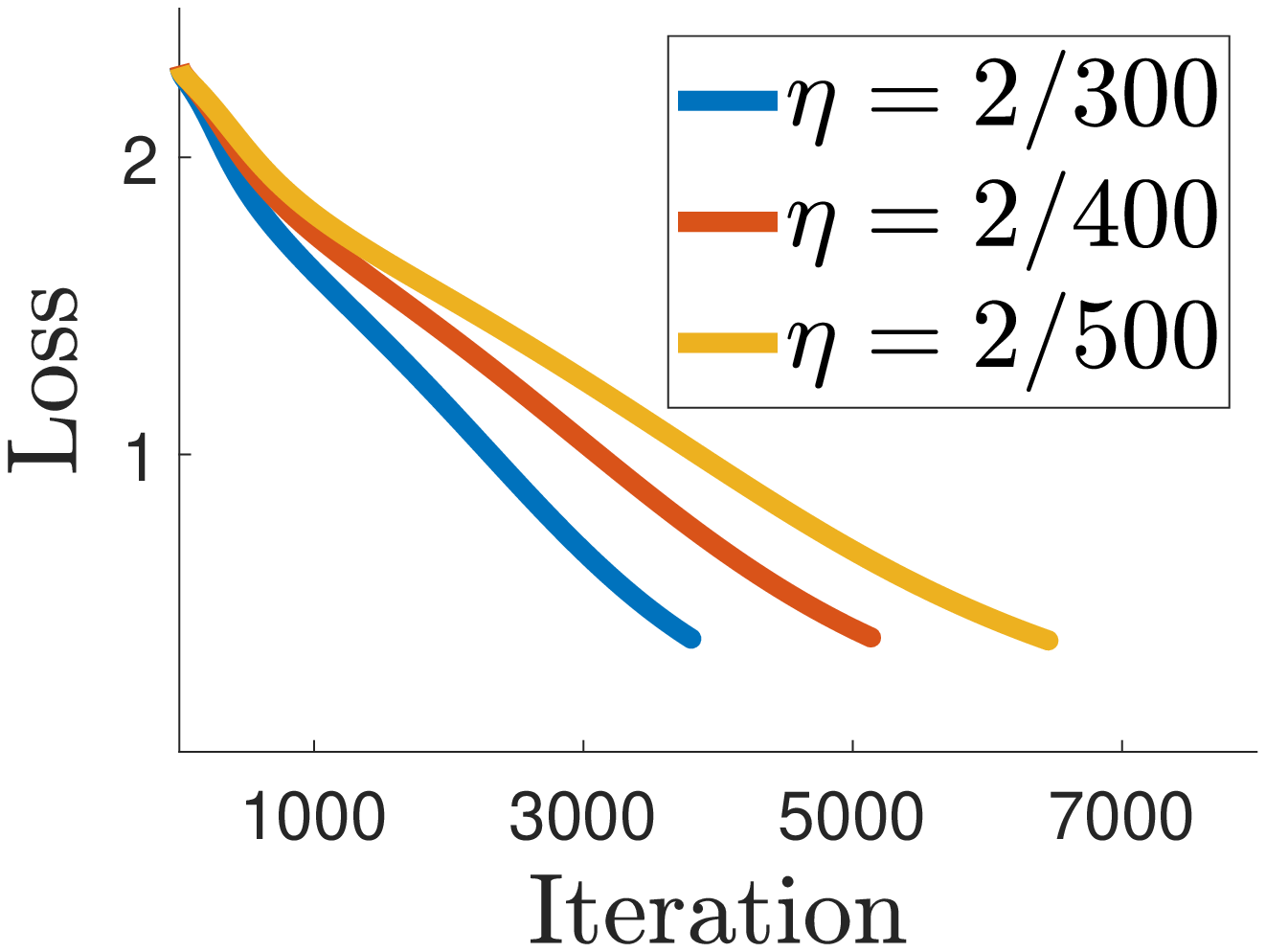} 	\includegraphics[width=0.45\columnwidth]{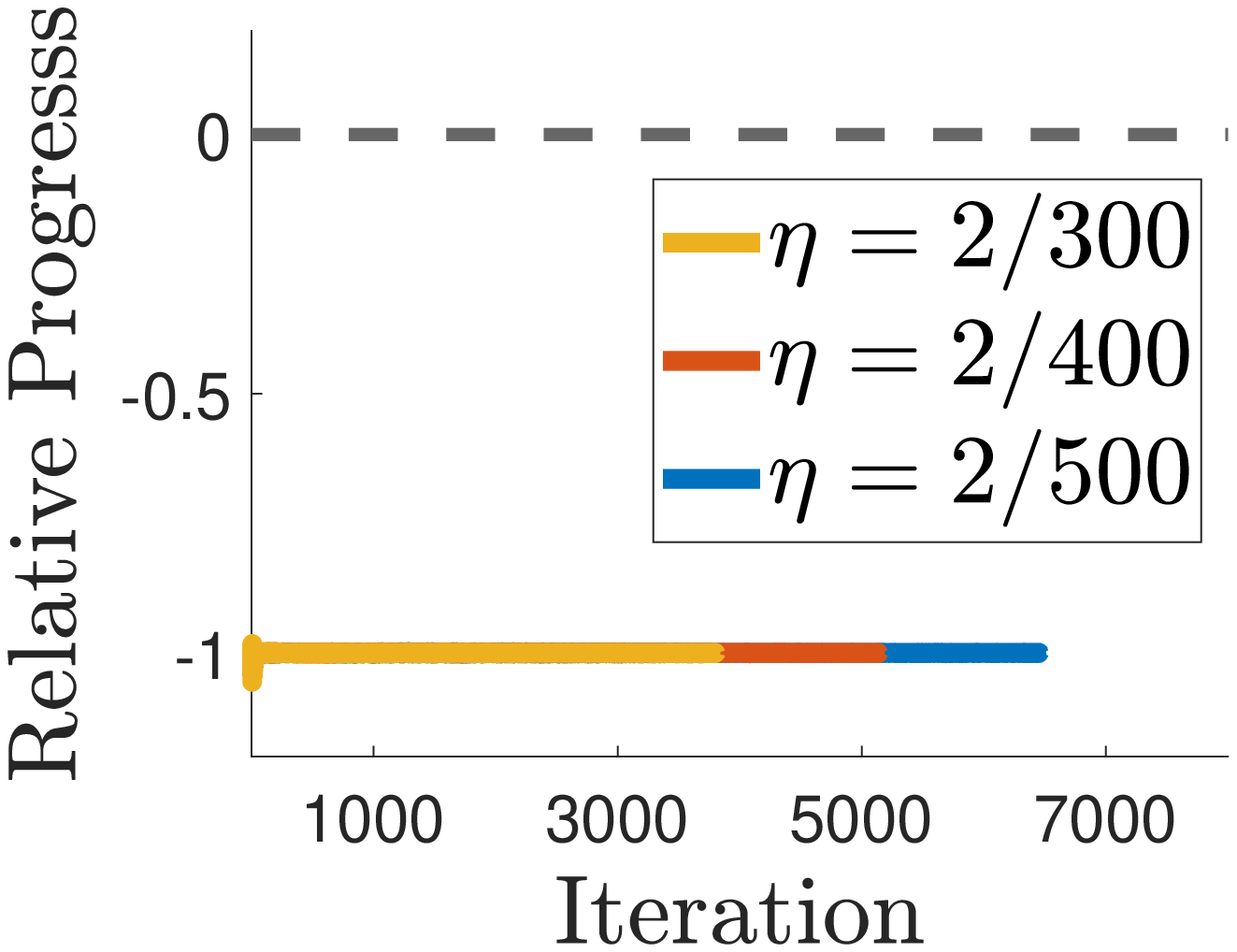}

 }
 
 From the above plots, one can see that  the \rp{} ratio stays negative for all iterations. 
 Moreover, there is no non-monotonic behavior in the loss curve.
 \qqq
\end{experiment}

\begin{remark} \label{rmk:stable-1}
Given the result above, one might wonder why the \rp{} saturates around $-1$.
Although we do not have a clear explanation, we suspect that this happens because the trajectory of GD quickly converges to a single direction. We will quickly revisit this later this section. See \autoref{rmk:stable}.
\end{remark}

\subsubsection{Relative Progress Ratio under Unstable Convergence}

Given that \rp{} is strictly negative number in the stable regime, we now investigate how \rp{} ratio behaves in the case of unstable convergence. 
 
\begin{experiment}[CIFAR-10; unstable regime] \label{exp:unstable}
We use the same setting as \autoref{exp:stable}.
This time we choose
step sizes larger so that GD operates in the unstable convergence regime.
We plot the loss and the \rp{} ratio until the training accuracy hits $95\%$.

{\centering 

\includegraphics[width=0.32\columnwidth]{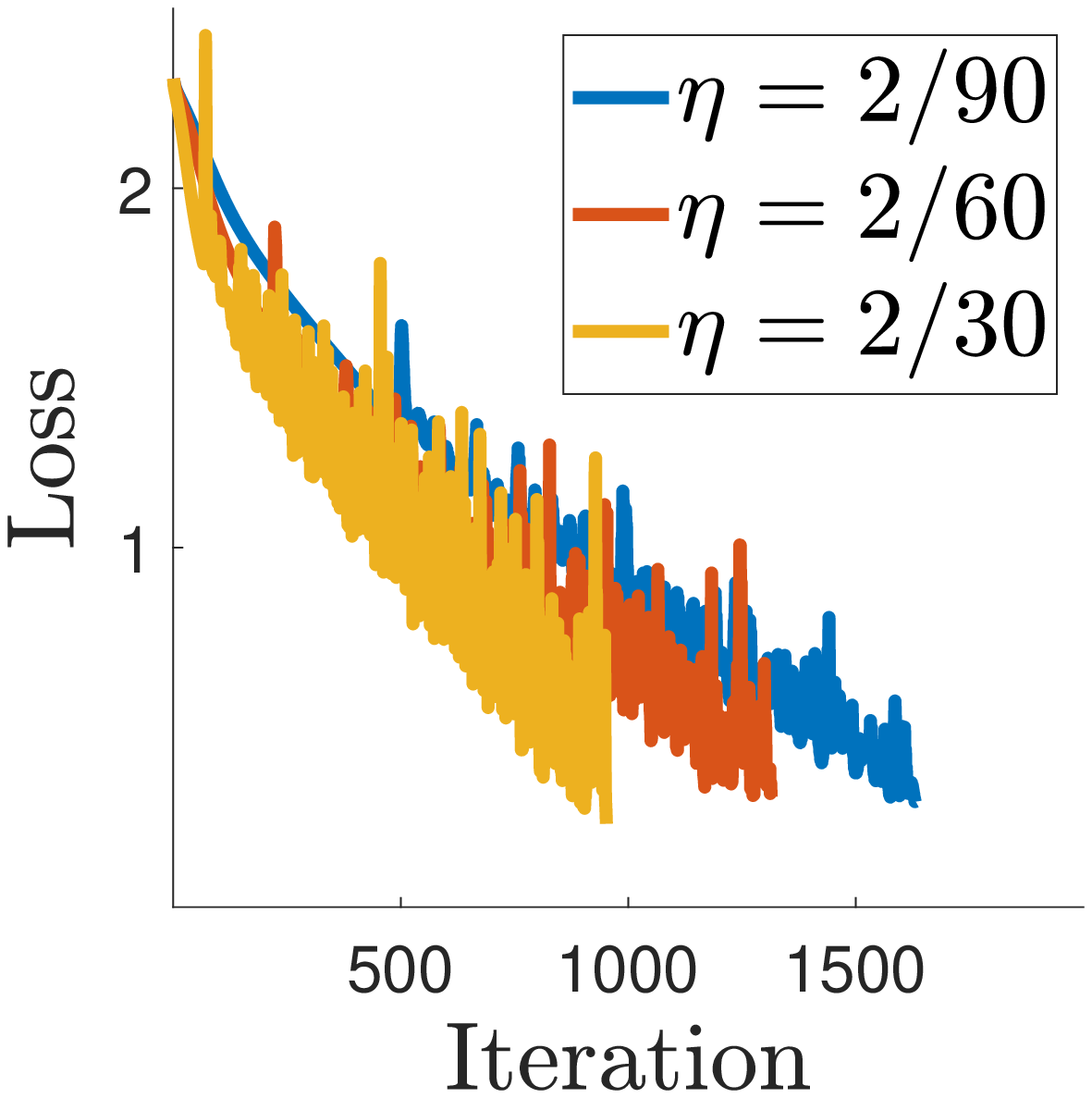} 	\includegraphics[width=0.32\columnwidth]{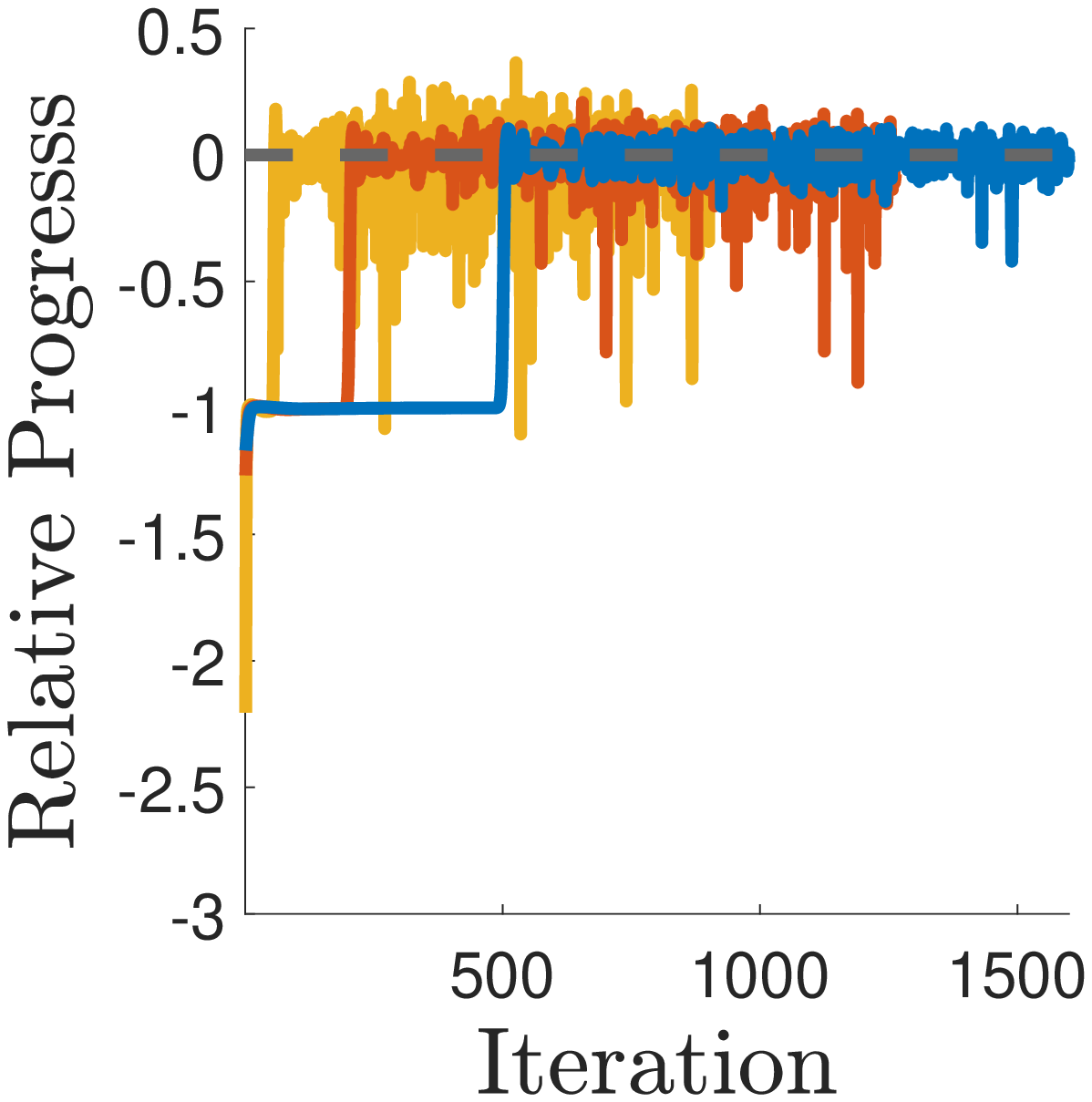}
\includegraphics[width=0.32\columnwidth]{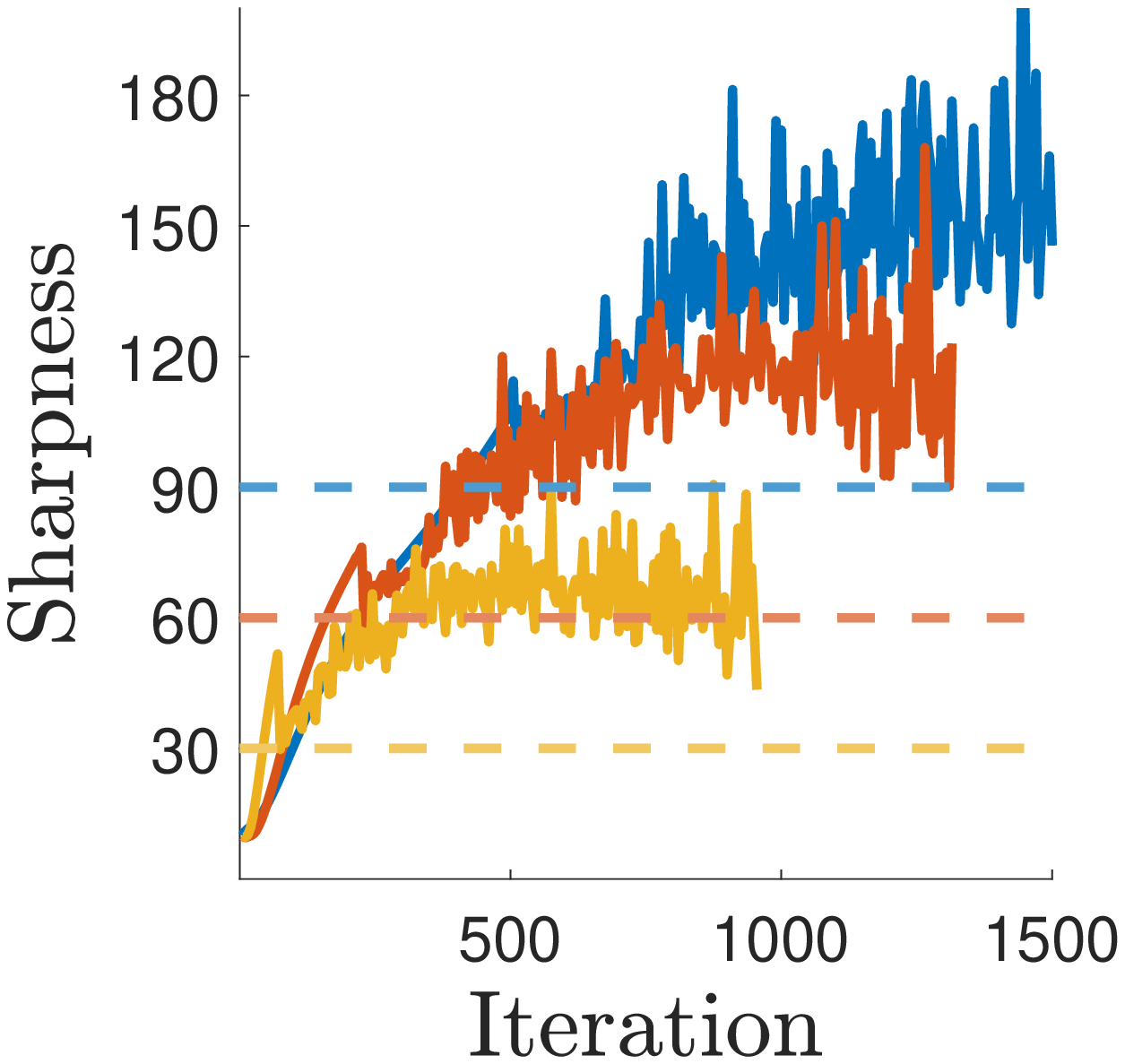}

 }

The above experiment shows that in the \ur, the \rp{} ratio saturates around $0$ unlike the stable regime.
\qqq 
\end{experiment}  
\begin{remark}\label{rmk:fast}
One curious aspect of the above results is that the optimization seems to get faster as we choose larger step sizes.  
This is in fact one of the main observations in \cite{Cohen2021}, suggesting that the unstable convergence is preferred in practice for its faster optimization.
However, that does not mean one can increase the step size too large.
For example, in the above experiment, we observe that the training loss diverges for step size $\eta = 2/10$. 
\end{remark}

Based on \autoref{exp:unstable}, we raise the following question:
\begin{center}
    {\bf Q.} why does $\rr{\zz{t}}$ oscillate around $0$ under unstable convergence?
\end{center}

We begin with explaining why  $\rr{\zz{t}}$ cannot stay  above $0$.
Since the loss is converging in a long term, it cannot be that $\rr{\zz{t}}>0$ for many iterations;
otherwise, the loss will keep increasing, contradicting the convergence.

More curious part is the fact that $\rr{\zz{t}}$ cannot stay below zero, which directly contrasts with the stable regime.
To understand this phenomenon, we begin with some intuition.

We have seen that when GD encounters sharp minima, it  oscillates near the minima 
because it cannot stabilize to the minima (due to \autoref{thm:master}). 
In other words, the loss change $f(\zz{t+1})-f(\zz{t})$ would be much smaller compared to $\norm{\eta^2\nabla f(\zz{t})}^2$ the square of the distance that GD travels. Hence, intuitively, one might expect that \rp{} cannot be too negative under the \uc{} regime. 
We would like to formalize this intuition next.

\subsection{Characteristics in Iterates Movement}
\label{sec:iterate}
To that end, let us formally define what it means for GD to oscillate.
More generally, consider the situation where $\zz{}$ is updated by moving along the vector $-\vv$.
Then this update is oscillatory if the directional derivative at the updated parameter $\zz{}-\vv{}$ is nearly negative of that at $\zz{}$, i.e.,
\begin{align*}
    \inp{\vv}{\nabla f(\zz{} - \vv)}  \approx  -\inp{\vv}{\nabla f(\zz{})}\,.
\end{align*}
Inspired by this, we consider the following definition.

\begin{definition}[Directional smoothness] For an update vector $\vv$, we define
\begin{align*}
      \dir{\vv}{\zz{}}:=\frac{1}{\norm{\vv}^2}\inp{ \vv}{  \nabla f(\zz{})- \nabla f\big(\zz{} -  \vv \big)}.
\end{align*}
\end{definition}

Now coming back to the gradient descent where the update vector is $\vv = \eta \nabla f(\zz{})$,  we have
\begin{align*}
    \dir{\eta \nabla f(\zz{})}{\zz{}} = \frac{\inp{ \nabla f(\zz{})}{  \nabla f(\zz{}) - \nabla f\big(\zz{} - \eta \nabla f(\zz{}) \big)}}{\eta \norm{\nabla f(\zz{})}^2}.
\end{align*}
When GD is exhibiting oscillatory behaviour, we would have 
\begin{align*}
    \inp{\nabla f(\zz{})}{\nabla f(\zz{} - \vv)}  \approx  -\inp{\nabla f(\zz{})}{\nabla f(\zz{})}\,,
\end{align*}
in which case, it holds that 
\begin{align} \label{eq:oscil}
    \dir{\eta \nabla f(\zz{})}{\zz{}} \approx \frac{2}{\eta}\quad \text{(when GD  iterates oscillate)}.
\end{align} 

For intuition, let us quickly verify \eqref{eq:oscil} for quadratic costs.

 \begin{example}[Quadratics] \label{ex:dir_quad}
 Consider a quadratic loss function $f(\zz{}) = \zz{\top} P\zz{}$ with  $P \succeq 0$.  
 Then, the GD update reads $\zz{t+1} = ( I-\eta P) \zz{t}$. For an eigenvector/eigenvalue pair  $(\vq,\lambda )$ of $P$, the quantity $\inp{\vq_{\max} }{\zz{t}}$ evolves as 
\begin{align*}
    \inp{\vq_{\max} }{\zz{t}}  &= \vq^\top (I-\eta P)\zz{t-1} = (1-\eta \lambda) \inp{\vq_{\max} }{\zz{t-1}}\\
    &= (1-\eta \lambda)^t \inp{\vq_{\max} }{\zz{0}}\,.
\end{align*}
This implies that if $\lambda <2/\eta $, then  $\vq^\top \zz{t}\to 0$.
Hence,  if  $\eta =  2/\lmax{} (P)$, then after sufficiently large iterations $t$, we have $\zz{t} \approx (-1)^t \inp{\vq_{\max} }{\zz{0}} \vq_{\max}$,
in which case $ \dir{\eta \nabla f(\zz{})}{\zz{}} \approx \frac{2}{\eta}$. \qqq
\end{example}

Given the above view on ``oscillating'' iterates, we now measure directional smoothness under unstable convergence.
 \begin{experiment}[Directional smoothness in stable and unstable regimes] \label{exp:dir}
Under the same setting as Experiments~\ref{exp:stable} and \ref{exp:unstable}, we measure the value $\dir{\eta \nabla f(\zz{t})}{\zz{t}}$ at each iteration.
 
 {\centering

\includegraphics[width=0.48\columnwidth]{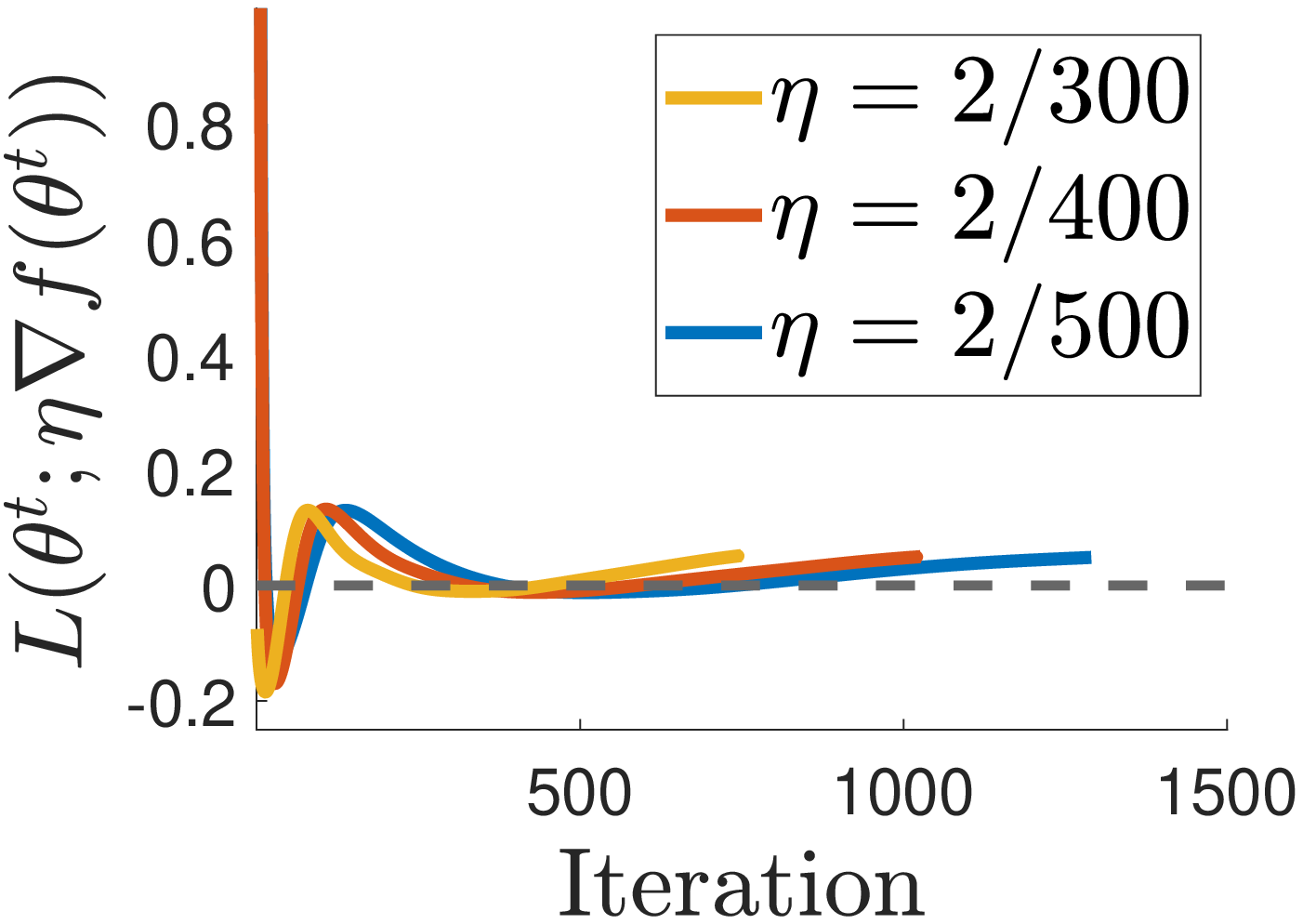}
\includegraphics[width=0.48\columnwidth]{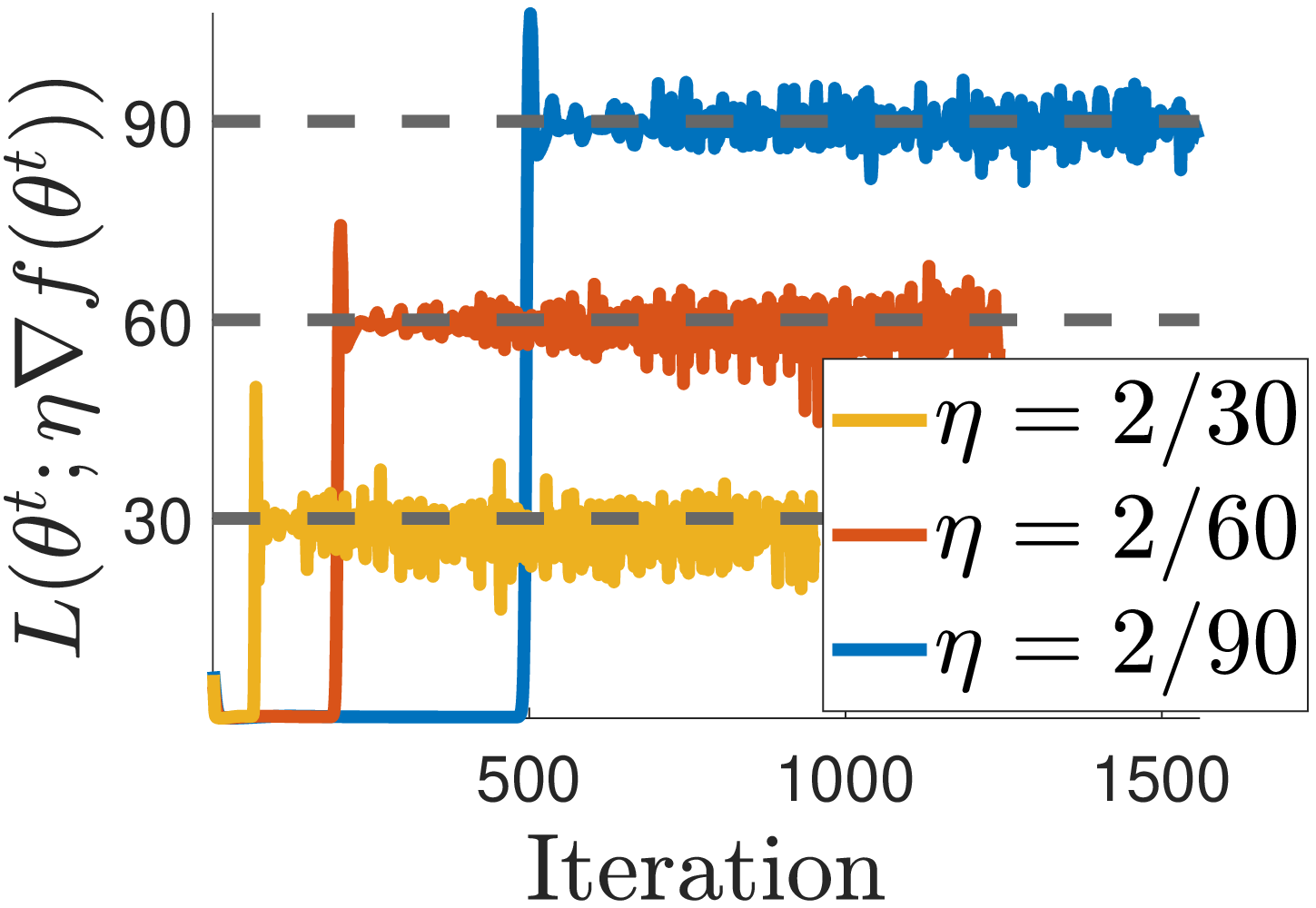}

 }
 
 Indeed, one can see that for the unstable regime $\dir{\eta \nabla f(\zz{t})}{\zz{t}}$ saturates around $2/\eta $, indicating that GD is exhibiting an oscillating behavior.
 \qqq
 \end{experiment}

   \autoref{exp:dir} verifies that GD is indeed showing an oscillating behavior under unstable convergence. We remark that a similar conclusion is made in \cite{xing2018walk} as well as the recent concurrent works \citep{ma2022multiscale,arora2022understanding}. Now coming back to our original question: \emph{can we show a formal relation between the directional smoothness and the relative progress ratio?}
   
   \subsection{Relation between  Relative Progress Ratio and Directionl Smoothness}
   \label{sec:equiv}
  \autoref{thm:equiv} formalizes our intuition that under the oscillating behavior of GD, $\rr{\zz{t}}$ cannot stay below zero.
  
  \begin{theorem} \label{thm:equiv}
  The following identity holds:
   \begin{align}
         \rr{\zz{}} = -1+\frac{\eta}{2}\cdot  2\int_{0}^1 \tau\cdot \dir{\eta \tau \nabla f(\zz{})}{\zz{}} \ \D \tau \,. 
   \end{align}
  \end{theorem}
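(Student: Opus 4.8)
The plan is to prove the identity by expressing $f(\zz{} - \eta\nabla f(\zz{})) - f(\zz{})$ via the fundamental theorem of calculus along the segment $\tau \mapsto \zz{} - \tau\eta\nabla f(\zz{})$ for $\tau \in [0,1]$, and then recognizing the resulting integrand in terms of the directional smoothness $L(\cdot;\cdot)$. Write $\vv = \eta\nabla f(\zz{})$ and $\phi(\tau) = f(\zz{} - \tau\vv)$, so that $\phi'(\tau) = -\inp{\vv}{\nabla f(\zz{} - \tau\vv)}$ and
\begin{align*}
f(\zz{} - \vv) - f(\zz{}) = \phi(1) - \phi(0) = -\int_0^1 \inp{\vv}{\nabla f(\zz{} - \tau\vv)}\ \D\tau.
\end{align*}

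Next I would split the integrand by adding and subtracting $\nabla f(\zz{})$: writing $\inp{\vv}{\nabla f(\zz{} - \tau\vv)} = \inp{\vv}{\nabla f(\zz{})} - \inp{\vv}{\nabla f(\zz{}) - \nabla f(\zz{} - \tau\vv)}$. The first term integrates trivially to $\inp{\vv}{\nabla f(\zz{})} = \eta\norm{\nabla f(\zz{})}^2 = \norm{\vv}^2/\eta$, which after dividing by $\eta\norm{\nabla f(\zz{})}^2$ contributes the $-1$. For the second term, the key observation is that $\nabla f(\zz{}) - \nabla f(\zz{} - \tau\vv) = \nabla f(\zz{}) - \nabla f(\zz{} - (\tau\vv))$, and by definition $\dir{\tau\vv}{\zz{}} = \frac{1}{\norm{\tau\vv}^2}\inp{\tau\vv}{\nabla f(\zz{}) - \nabla f(\zz{} - \tau\vv)}$, so $\inp{\vv}{\nabla f(\zz{}) - \nabla f(\zz{} - \tau\vv)} = \tau\norm{\vv}^2 \dir{\tau\vv}{\zz{}}$ — note the single power of $\tau$, since $\inp{\vv}{\cdot} = \frac{1}{\tau}\inp{\tau\vv}{\cdot}$ cancels one factor from $\norm{\tau\vv}^2 = \tau^2\norm{\vv}^2$. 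Substituting $\vv = \eta\nabla f(\zz{})$ gives $\tau\vv = \eta\tau\nabla f(\zz{})$, so this term equals $\tau\,\eta^2\norm{\nabla f(\zz{})}^2\,\dir{\eta\tau\nabla f(\zz{})}{\zz{}}$.

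Assembling the pieces: dividing the whole expression by $\eta\norm{\nabla f(\zz{})}^2$ yields
\begin{align*}
\rr{\zz{}} = -1 + \int_0^1 \tau\,\eta\,\dir{\eta\tau\nabla f(\zz{})}{\zz{}}\ \D\tau = -1 + \frac{\eta}{2}\cdot 2\int_0^1 \tau\,\dir{\eta\tau\nabla f(\zz{})}{\zz{}}\ \D\tau,
\end{align*}
which is the claimed identity. There is essentially no hard step here — the computation is routine once the substitution is set up correctly. The one place to be careful, and the only thing I would double-check, is the bookkeeping of powers of $\tau$: the factor $\norm{\tau\vv}^2 = \tau^2\norm{\vv}^2$ in the denominator of the directional smoothness definition interacts with the $\inp{\vv}{\cdot}$ versus $\inp{\tau\vv}{\cdot}$ rescaling to leave exactly one surviving $\tau$ in the integrand, which is what produces the $2\int_0^1 \tau(\cdots)\D\tau$ form (a weighted average against the density $2\tau$ on $[0,1]$) rather than a plain average. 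A mild regularity remark ($f$ being $C^1$, or even just differentiable with the segment in the domain, so that $\phi$ is absolutely continuous) suffices to justify the fundamental theorem of calculus step.
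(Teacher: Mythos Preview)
Your proposal is correct and follows essentially the same route as the paper's own proof: apply the fundamental theorem of calculus along the segment $\tau\mapsto \zz{}-\eta\tau\nabla f(\zz{})$, add and subtract $\nabla f(\zz{})$ inside the integrand, and identify the remainder with $\dir{\eta\tau\nabla f(\zz{})}{\zz{}}$ after the $\tau$-bookkeeping you describe. Your explicit tracking of the single surviving power of $\tau$ is a useful clarification, but there is no substantive difference in approach.
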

 \begin{proof}
See \autoref{app:pf:equiv}.
 \end{proof}
 
 \autoref{thm:equiv} implies that if the weighted average of $\dir{\eta \tau \nabla f(\zz{})}{\zz{}}$  is close to $2/\eta$, namely
 \begin{align*}
     2\int_{0}^1 \tau\cdot \dir{\eta \tau \nabla f(\zz{})}{\zz{}} \ \D \tau \approx \frac{2}{\eta}\,,
 \end{align*}
 then $\rr{\zz{}}$ is indeed approximately equal to zero.
 This formally justifies that when  GD  shows an oscillating behavior,  $\rr{\zz{t}}$ cannot stay below zero.

 In our last experiment of this subsection, we verify that the above weighted average is approximately equal to the single value  $\dir{\eta \nabla f(\zz{})}{\zz{}}$, building a stronger relation between the directional smoothness and the relative progress ratio.
 
 \begin{experiment} \label{exp:const}
 In the same setting as \autoref{exp:unstable}, we choose step size $\eta = 2/60$ and in every $5$ iterations, we compute the following values:
 \begin{align*}
     \dir{\eta \tau \nabla f(\zz{t})}{\zz{t}}\quad \text{for}~\tau\in \{0.01,0.02,\dots, 1\}.
 \end{align*}
 In the plot below, we report  the mean of $\dir{\eta \tau \nabla f(\zz{t})}{\zz{t}}$ among $\tau\in \{0.01,0.02,\dots, 1\}$ together with the shades which indicate the standard deviations.

 	 \includegraphics[width=\columnwidth]{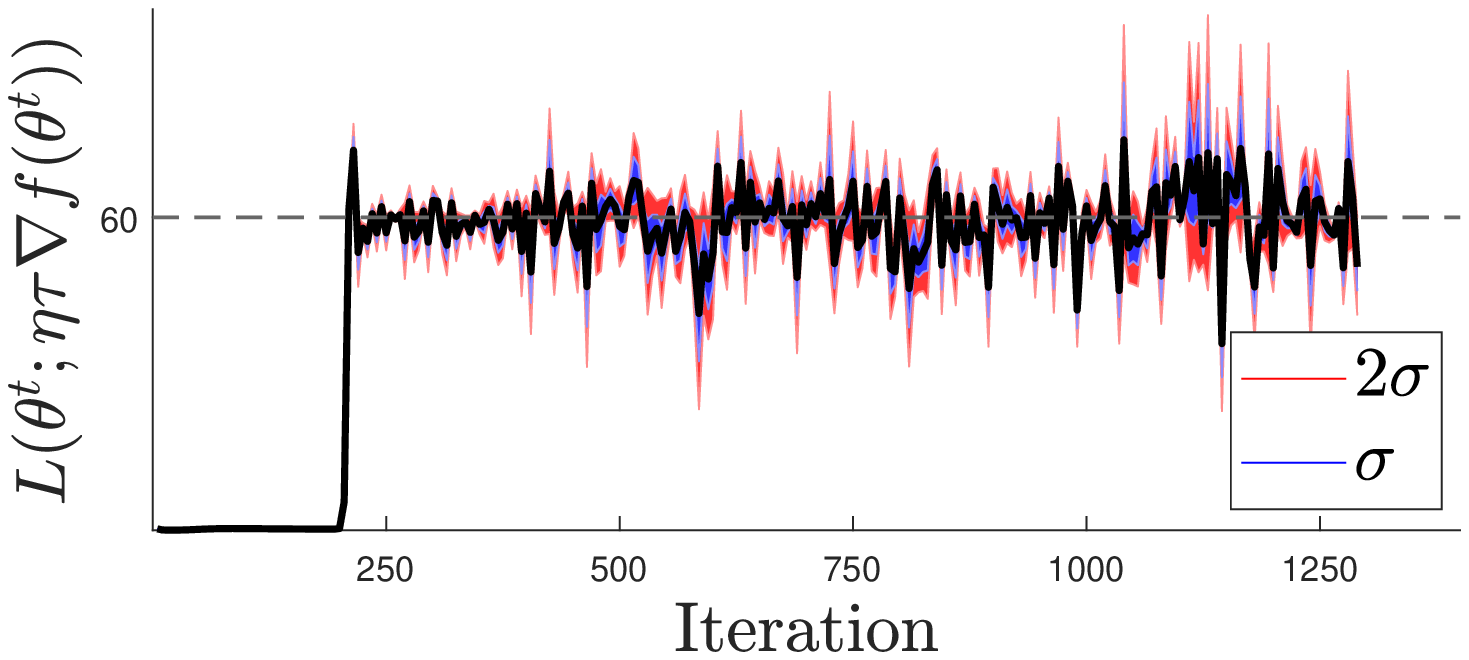}

  This experiment verifies that $\dir{\eta \tau \nabla f(\zz{t})}{\zz{t}}$ does vary too much across $\tau \in [0,1]$. 
  Hence,  the single value  $\dir{\eta \nabla f(\zz{})}{\zz{}}$ well represents the weighted average in \autoref{thm:equiv}. \qqq
 \end{experiment}

 Hence, \autoref{exp:const} justifies the relation
  \begin{align} \label{rel:rp}
        \boxed{ \rr{\zz{}} \approx  -1+\frac{\eta}{2}\cdot    \dir{\eta \nabla f(\zz{})}{\zz{}}\,,} 
 \end{align}
 which precisely explains how the oscillatory behavior of GD results in a small \rp{} ratio.
  
  \begin{remark} \label{rmk:hessian_lip}
  Interestingly, the validity of equation~\eqref{rel:rp} and  \autoref{exp:const} suggests that even though the gradient Lipschitzness is not a good assumption for neural networks, some form of Hessian Lipschitzness is valid along the GD trajectory. 
  \end{remark}
  
  We summarize the finding in this section as follows.
  
  \begin{takeaway}
  Under the unstable convergence regime,  $\rr{\zz{t}}$ oscillates near $0$ for the following two reasons:
         \begin{compactitem}
           \item $\rr{\zz{t}}$ can't stay above $0$ because otherwise the loss would not decrease in the long run.
 
            \item $\rr{\zz{t}}$ can't stay below $0$ due to  the oscillating behavior of GD iterates. This is formalized via \eqref{rel:rp}.
        \end{compactitem}
  \end{takeaway}

\begin{remark}\label{rmk:stable}
Given \eqref{rel:rp}, one can have a better explanation for \autoref{rmk:stable-1} regarding why $\rr{\zz{t}}$ saturates around $-1$.
In the second result of \autoref{exp:dir}, the directional smoothness remains very small in the stable regime. 
Based on \eqref{rel:rp}, this implies that  $\rr{\zz{t}}$ is close to $-1$, which was indeed the case in \autoref{exp:stable}.
\end{remark}

 \subsubsection{Additional experiments} 
In this subsection, we verify the relation \eqref{rel:rp} for other experimental settings.

\begin{experiment}[CIFAR-10; ReLU networks] \label{exp:relu}
Under the same setting as \autoref{exp:stable} (the setting of the main experiments in \cite{Cohen2021}), this time we choose ReLU as activation functions. 

{\centering 

\includegraphics[width=0.32\columnwidth]{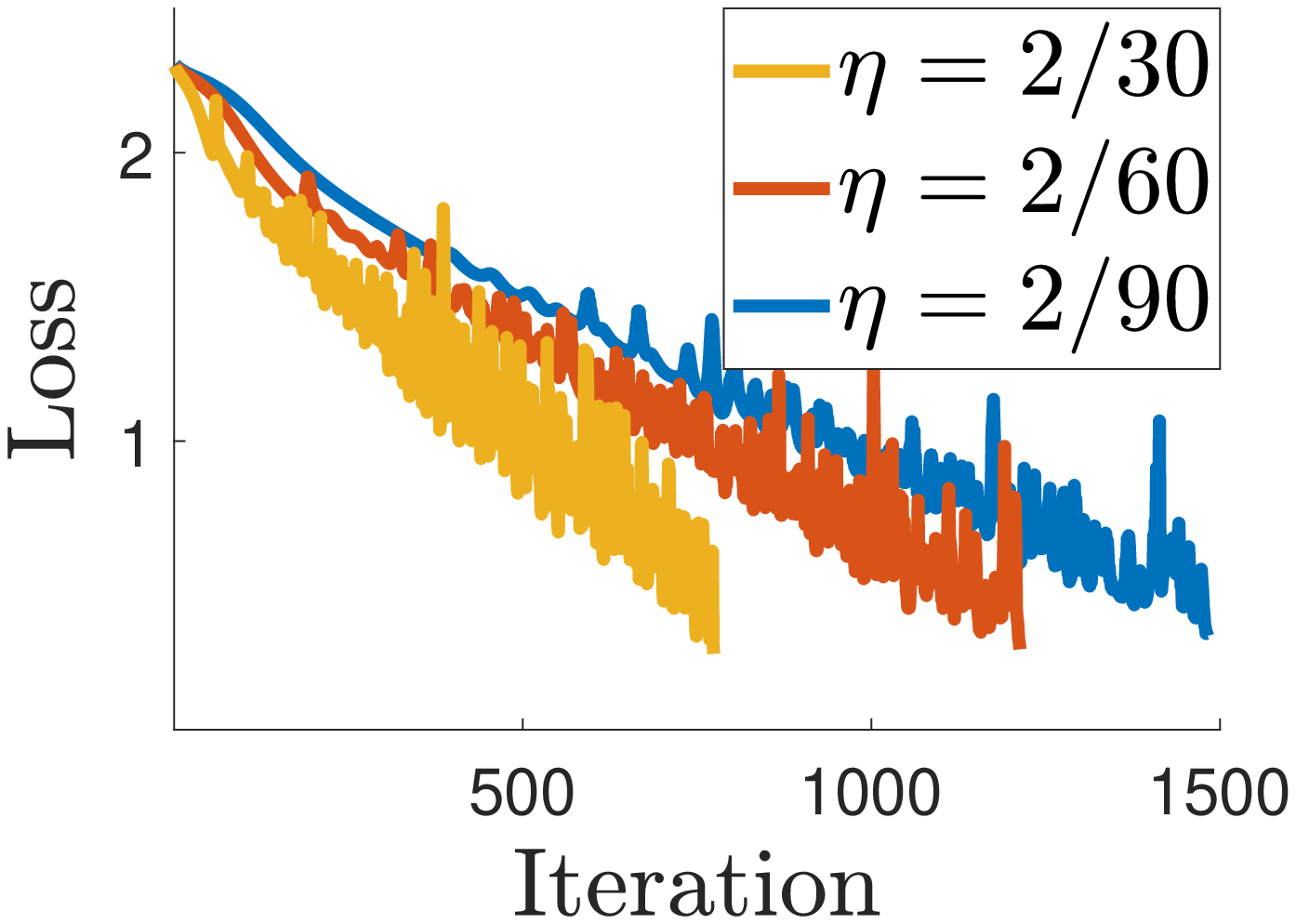} 
\includegraphics[width=0.32\columnwidth]{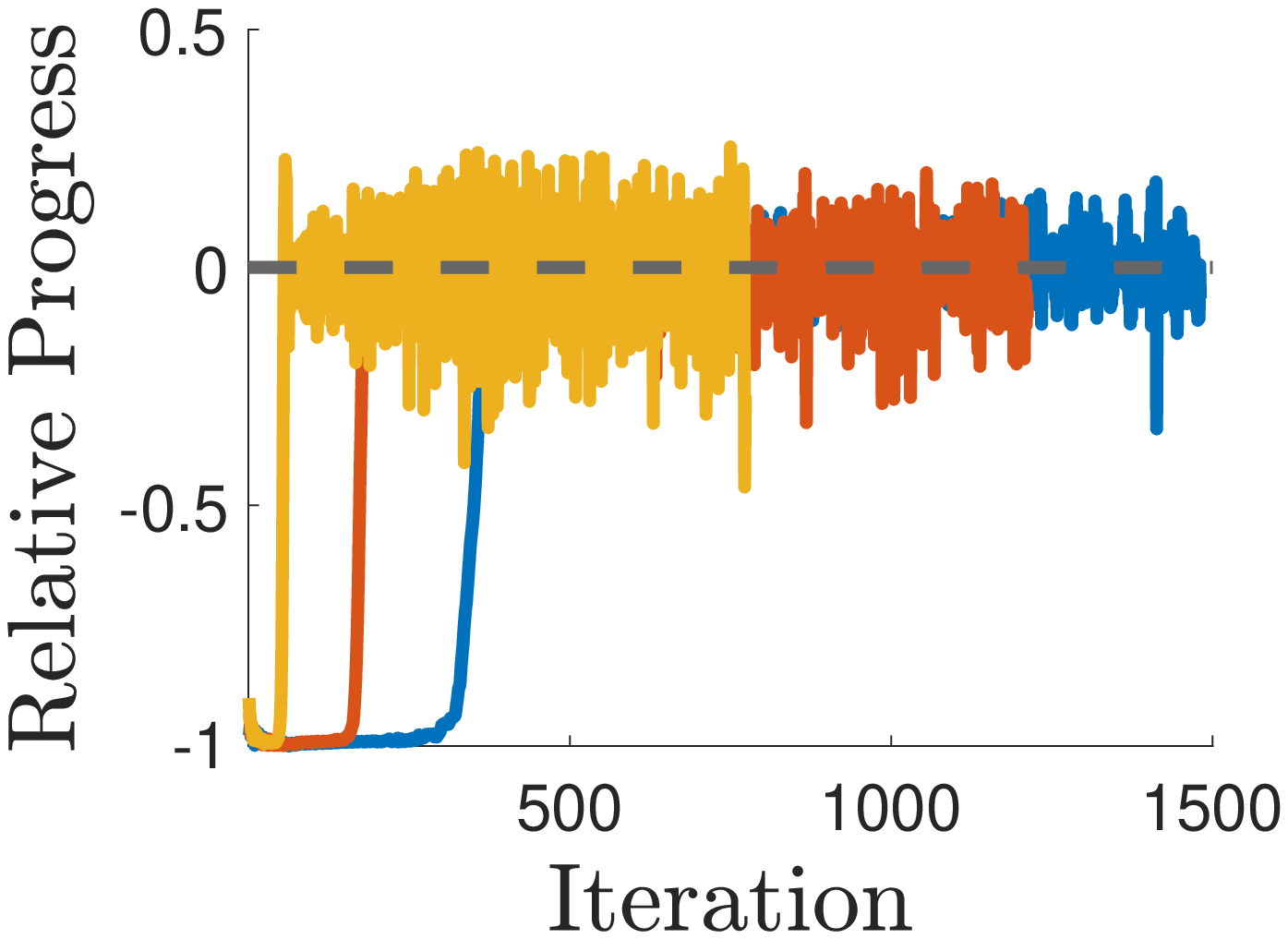}	\includegraphics[width=0.32\columnwidth]{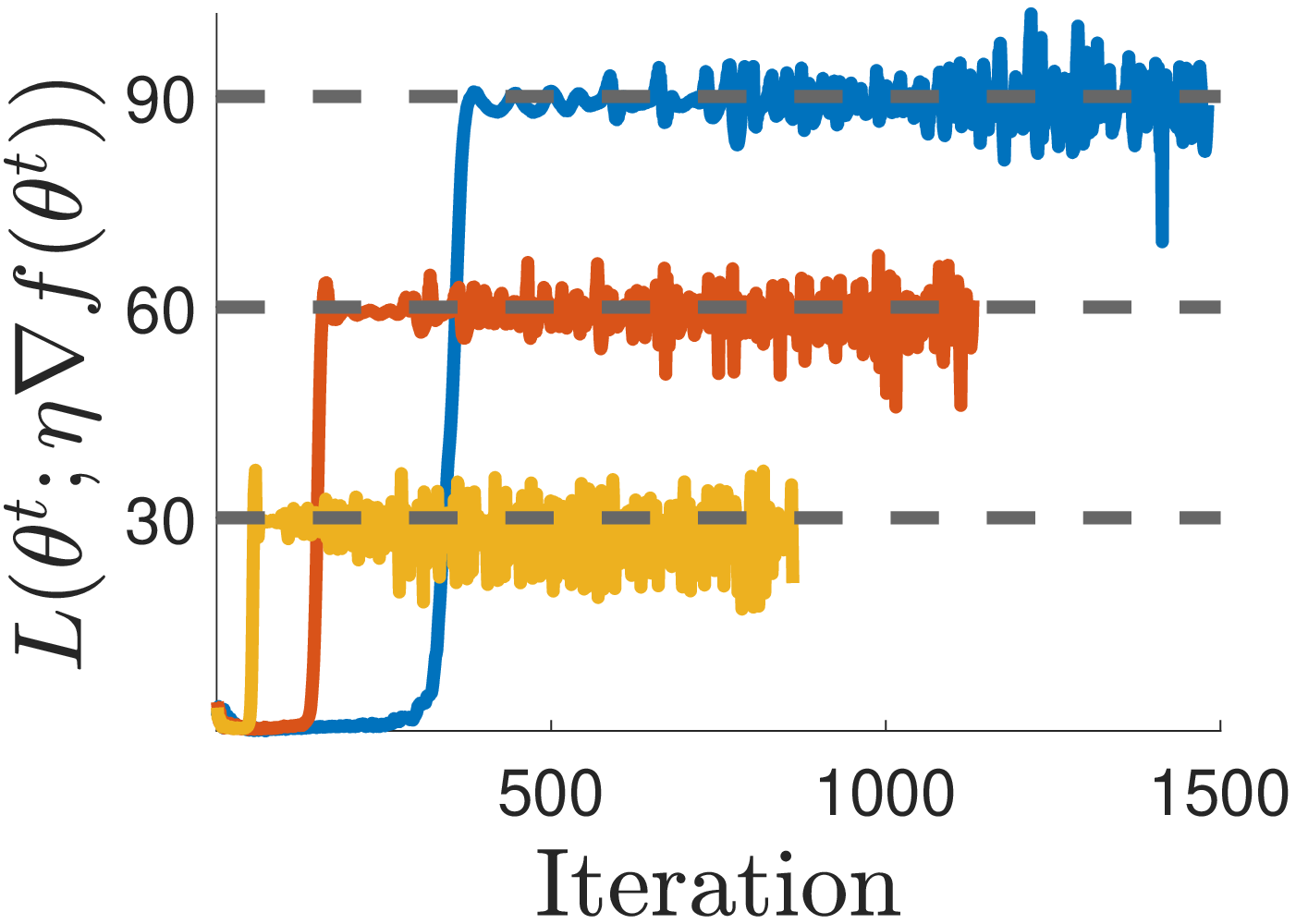}

}

In the next set of experiments, we put $2$ more hidden layers of width $200$ (total $4$ hidden layers of width $200$).

 \includegraphics[width=0.32\columnwidth]{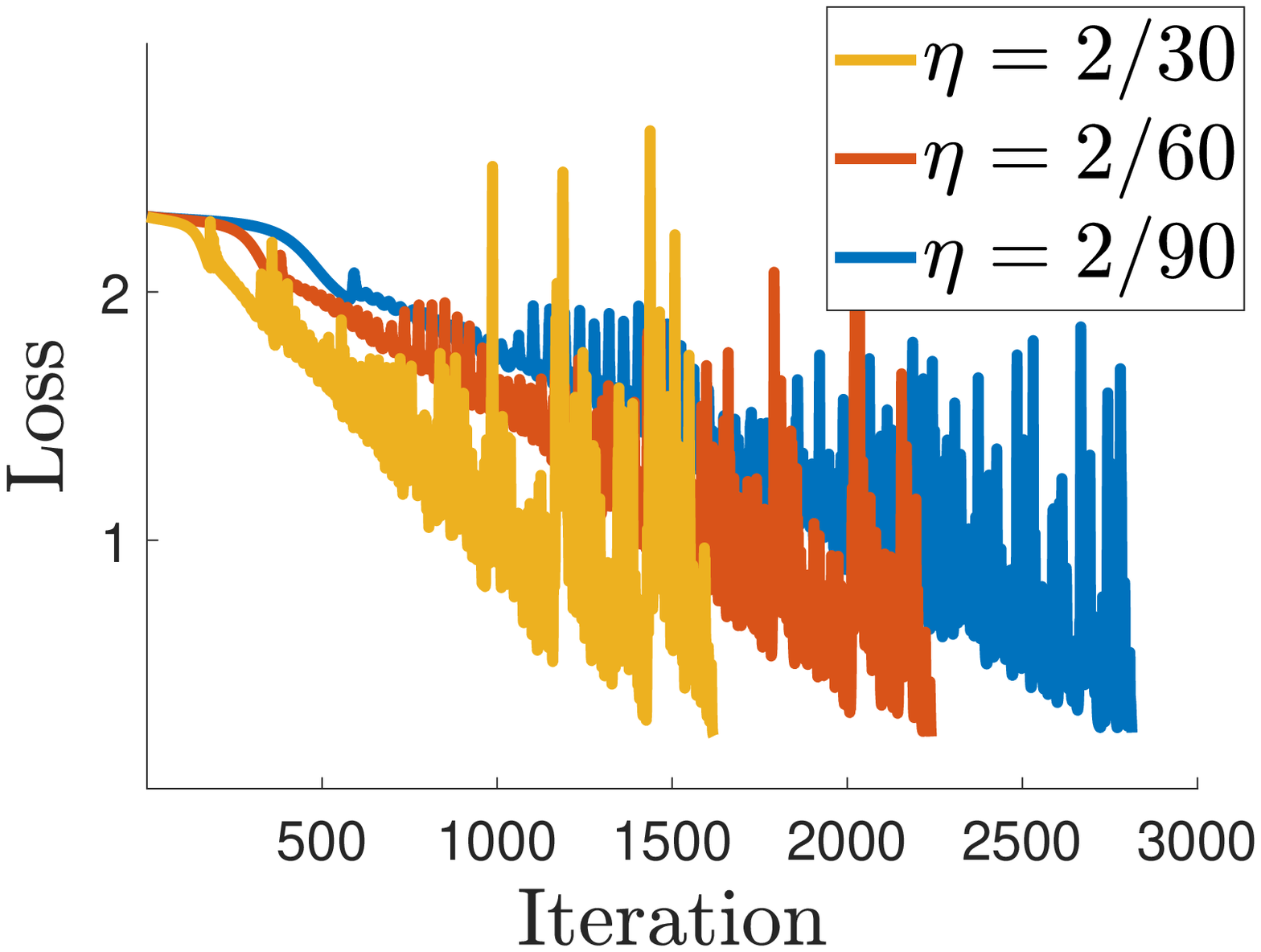} \includegraphics[width=0.32\columnwidth]{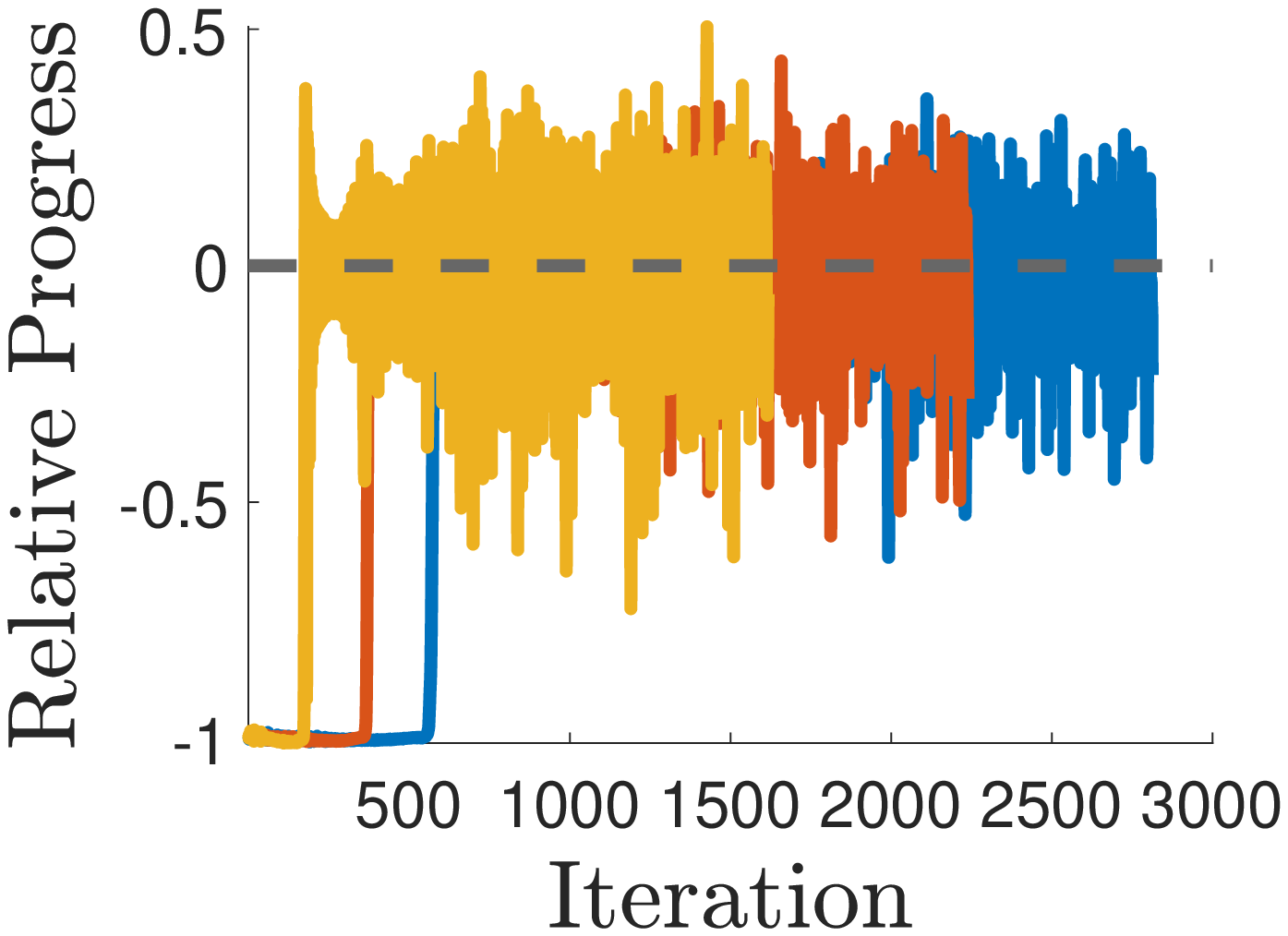}\includegraphics[width=0.32\columnwidth]{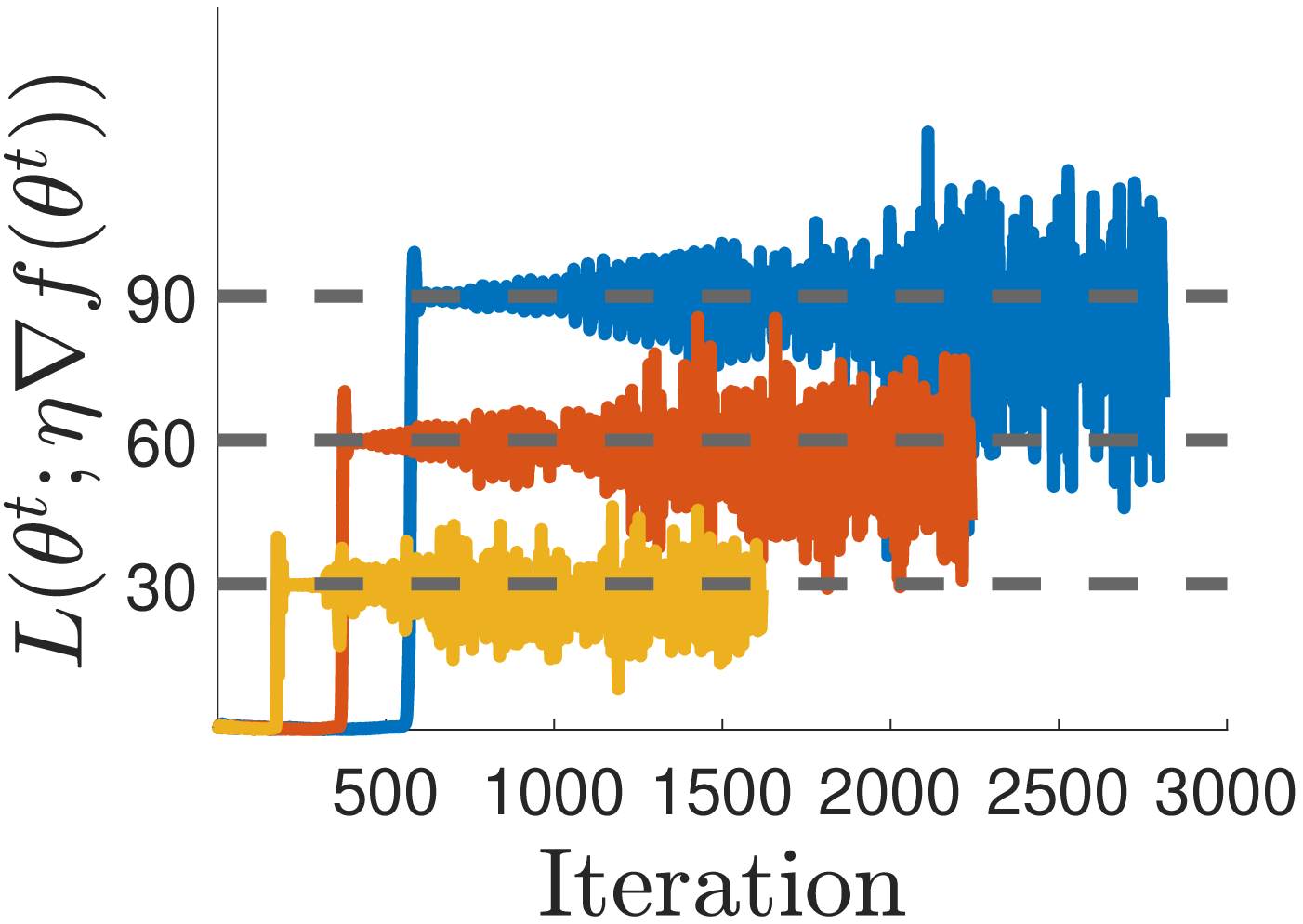}

 The results are largely similar to those for $\tanh$ activations, and the relation \eqref{rel:rp} holds for all cases.
 \qqq
\end{experiment}

\subsection{Implications for Sharpness}

\label{sec:sharp}
   
   In the previous subsection, we saw that one characteristics of unstable convergence is that  $\rr{\zz{t}}$ saturates around zero.
  In this section, we investigate implications of this characteristics for sharpness.  
  In particular, we discuss some relations to a curious phenomenon called \emph{edge of stability} (EoS) recently observed in \cite{Cohen2021}.
  The gist of their observation is that for GD on neural networks often satisfies the following properties:  
  \begin{compactitem}
    \item[A.]  $\lmax{}(\nabla^2 f(\zz{t}))> 2/\eta$ for most iterates.
    \item[B.] In fact, in many cases  $\lmax{}(\nabla^2 f(\zz{t}))$ saturates right at (or slightly above)  $2/\eta$.
  \end{compactitem}

  To that end, we begin with the following consequence of \autoref{thm:equiv}.
  \begin{corollary}\label{cor:sharpness}
Let $\sm{t}$ be the maximum sharpness along the line segment between the iterates $\zz{t}$ and $\zz{t+1}$, i.e., $\sm{t}:=\sup_{\zz{} \in \overline{\zz{t}\zz{t+1}}}\{\lmax{}(\nabla^2 f(\zz{})) \}$.  Then, the following inequality holds:
  \begin{align*}
   \frac{2}{\eta }\cdot (\rr{\zz{}} +1) \leq    \sm{t} \,.   
  \end{align*} 
  \end{corollary}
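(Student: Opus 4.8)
The plan is to derive the bound directly from the integral identity in \autoref{thm:equiv} by upper-bounding the directional smoothness pointwise. Recall from \autoref{thm:equiv} that
\begin{align*}
  \rr{\zz{}} + 1 = \frac{\eta}{2}\cdot 2\int_0^1 \tau\cdot \dir{\eta\tau\nabla f(\zz{})}{\zz{}}\,\D\tau\,.
\end{align*}
So it suffices to show that for every $\tau\in[0,1]$, the directional smoothness $\dir{\eta\tau\nabla f(\zz{})}{\zz{}}$ is at most $\sm{t}$, the supremum of the sharpness over the segment $\overline{\zz{t}\zz{t+1}}$; then, since $2\int_0^1\tau\,\D\tau = 1$, the weighted average is also at most $\sm{t}$, and multiplying by $\eta/2$ and rearranging gives $\tfrac{2}{\eta}(\rr{\zz{}}+1)\le \sm{t}$.

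The key step, then, is the pointwise bound. First I would write, using the fundamental theorem of calculus along the segment from $\zz{}-\eta\tau\nabla f(\zz{})$ to $\zz{}$,
\begin{align*}
  \nabla f(\zz{}) - \nabla f(\zz{}-\eta\tau\nabla f(\zz{})) = \Bigl(\int_0^1 \nabla^2 f\bigl(\zz{}-s\eta\tau\nabla f(\zz{})\bigr)\,\D s\Bigr)(\eta\tau\nabla f(\zz{}))\,.
\end{align*}
Taking the inner product with $\eta\tau\nabla f(\zz{})$ and dividing by $\norm{\eta\tau\nabla f(\zz{})}^2$, we get that $\dir{\eta\tau\nabla f(\zz{})}{\zz{}}$ is a Rayleigh-quotient-type average of $\nabla^2 f$ evaluated along points of the form $\zz{}-s\eta\tau\nabla f(\zz{})$ with $s,\tau\in[0,1]$. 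Each such point lies on the segment $\overline{\zz{t}\zz{t+1}}$ (since $s\tau\in[0,1]$ and $\zz{t+1}=\zz{t}-\eta\nabla f(\zz{t})$), so each Rayleigh quotient $\tfrac{\vw^\top\nabla^2 f(\cdot)\vw}{\norm{\vw}^2}$ is bounded above by $\lmax{}(\nabla^2 f(\cdot))\le \sm{t}$. Averaging preserves the bound, giving $\dir{\eta\tau\nabla f(\zz{})}{\zz{}}\le \sm{t}$ as desired.

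The main thing to be careful about — rather than a deep obstacle — is the bookkeeping on which points the Hessian gets evaluated at: one must confirm that all arguments of $\nabla^2 f$ appearing in the double integral (over $s$ and, after substituting into \autoref{thm:equiv}, over $\tau$) lie in the segment $\overline{\zz{t}\zz{t+1}}$, so that $\sm{t}$ is a valid uniform upper bound; this is immediate because the coefficient $s\tau$ ranges in $[0,1]$. One should also note the bound uses only the upper eigenvalue of the Hessian in the relevant directions, so no Hessian-Lipschitz or smoothness assumption beyond $C^2$ on the segment is needed. Finally, since $2\int_0^1\tau\,\D\tau=1$, the constant comes out exactly, yielding the stated inequality $\tfrac{2}{\eta}(\rr{\zz{}}+1)\le\sm{t}$.
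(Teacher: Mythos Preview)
Your proposal is correct and follows essentially the same approach as the paper: bound each $\dir{\eta\tau\nabla f(\zz{})}{\zz{}}$ by the supremum of $\lmax{}(\nabla^2 f)$ over the relevant sub-segment (hence by $\sm{t}$), then plug into the integral identity of \autoref{thm:equiv} and use $2\int_0^1\tau\,\D\tau=1$. The paper's proof simply asserts the pointwise bound, whereas you helpfully spell out the fundamental-theorem-of-calculus / Rayleigh-quotient justification, but the argument is the same.
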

  \begin{proof}
  It follows from the fact that for each $\tau \in [0,1]$ $\dir{\eta \tau \nabla f(\zz{})}{\zz{}} \leq \sup\{ \lmax{}(\nabla^2 f(\zz{}))~:~\zz{}$ lies on the line segment between $\zz{t}$ and $\zz{t}-\eta\tau \nabla f(\zz{})\}$. Clearly, the right hand side is upper bounded by $\sm{t}$. 
  \end{proof}

\autoref{cor:sharpness} implies that when $\rr{\zz{t}}$ oscillates around zero, then $\sm{t}$ has to be frequently above the threshold $2/\eta$.
One can actually refine this statement to understand the part A of EoS, given our results so far.
In light of \autoref{exp:const}, if 
$\dir{\eta \tau \nabla f(\zz{t})}{\zz{t}}$ does vary much across $\tau \in [0,1]$, then one can actually write
\begin{align*}
   \frac{2}{\eta }  (\rr{\zz{t}} +1) &\approx \lim_{\tau \to 0 } \dir{\eta \tau \nabla f(\zz{t})}{\zz{t}}\\ &\overset{\clubsuit}{\leq} \lmax{}(\nabla^2 f(\zz{t})) \,,     \end{align*}
  which is the part A of EoS.
  
  Moreover, let us for a moment additionally assume that $\nabla f(\zz{t})$ is approximately parallel to the largest eigenvector of the Hessian $\nabla^2 f(\zz{t})$.
  This might look stringent at first glance, but given the calculations in \autoref{ex:dir_quad}, this assumption is true for unstable GD on a quadratic cost.
  Also, recently, this behavior is theoretically proven for the normalized gradient descent dynamics~\cite{arora2022understanding}.
  Under this assumption,  one can further deduce that the inequality $(\clubsuit)$ holds with approximate equality, and the part B of EoS would hold in that case.

\section{Relative Progress for SGD}
\label{sec:sgd}

In this section, we extend our discussion to the stochastic gradient descent (SGD):
 \begin{align*} 
     \zz{t+1} = \zz{t} - \eta g(\zz{t}),~~\text{where }\E[g(\zz{t})] = \nabla f(\zz{t})\,.
 \end{align*}
For the case of SGD, there is one obvious challenge.
With SGD, the training loss does not decrease monotonically  since SGD is a random algorithm.
Hence, it is not clear how to precisely define what it means for SGD to be in the unstable regime.
On the other hand, inspired by our discussion in \autoref{sec:loss}, a  more transparent way to define the unstable regime for SGD is via the \rp{} ratio.
In particular, we consider the following extension.
\begin{definition}[Expected relative progress ratio]
\begin{align*}
    \E[\rr{\zz{}}]:=\frac{\E f(\zz{}-\eta g(\zz{}) )-f(\zz{})}{\eta \norm{\nabla f(\zz{})}^2}\,.
\end{align*}
\end{definition}

\begin{experiment}[CIFAR-10; SGD on ReLU networks] \label{exp:sgd_relu}
Under the same setting as \autoref{exp:relu}, this time we train the network with SGD with minibatch size of $32$ and step size $\eta=2/100$. We compute the full-batch loss and the expected \rp{} ratio at the end of each epoch.

{\centering	
		\includegraphics[width=0.32\columnwidth]{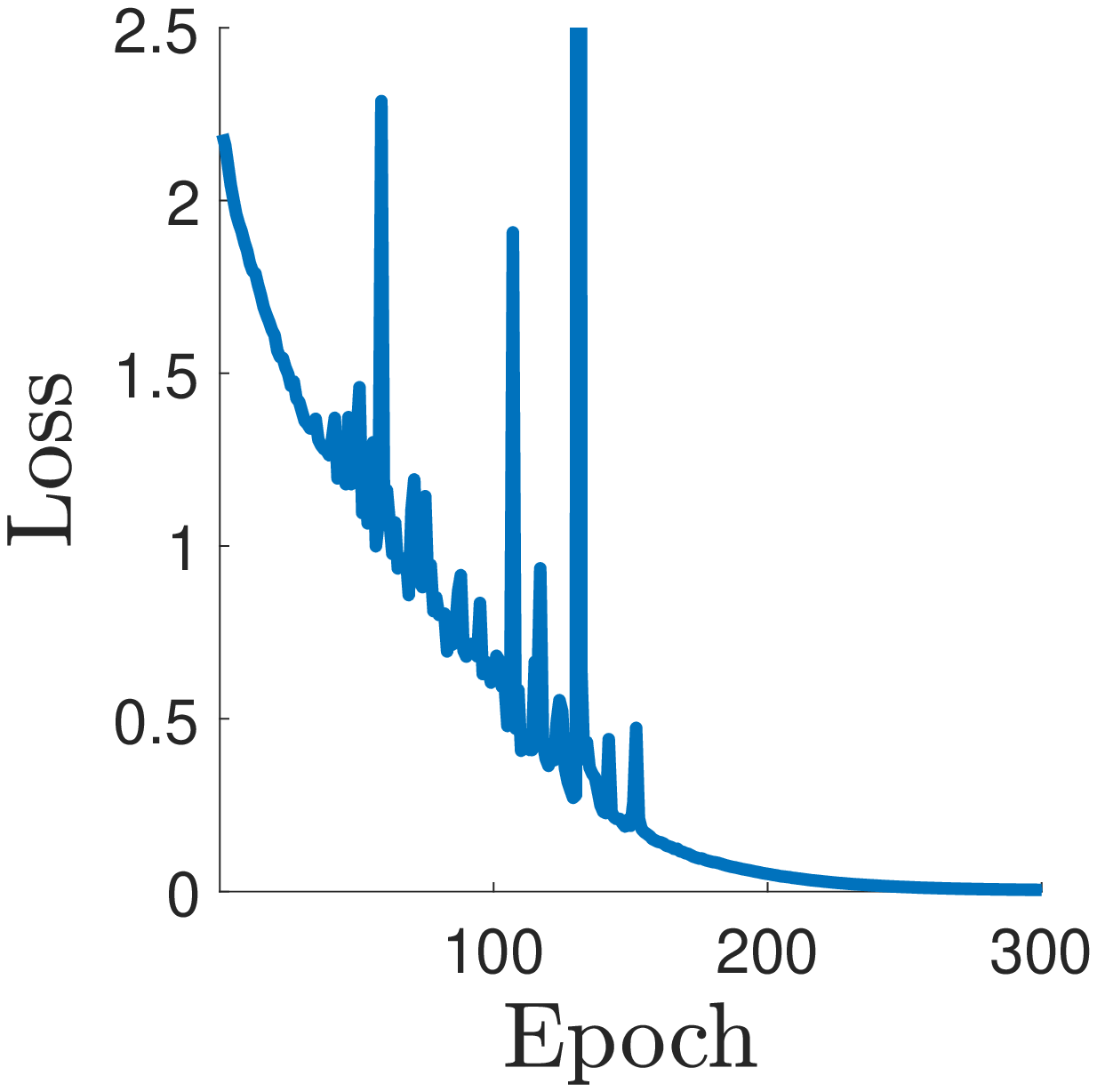}	\includegraphics[width=0.32\columnwidth]{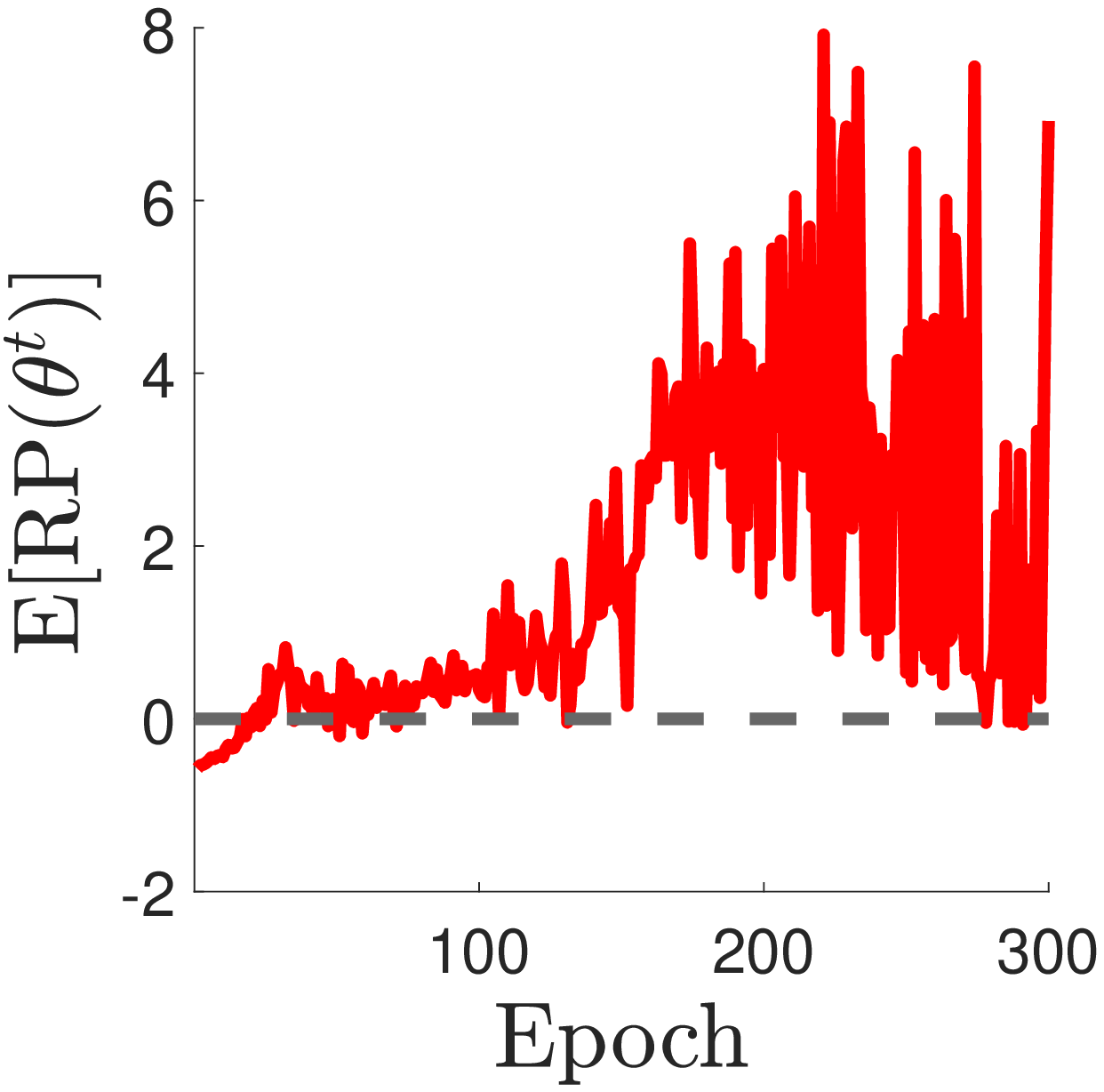}
		\includegraphics[width=0.32\columnwidth]{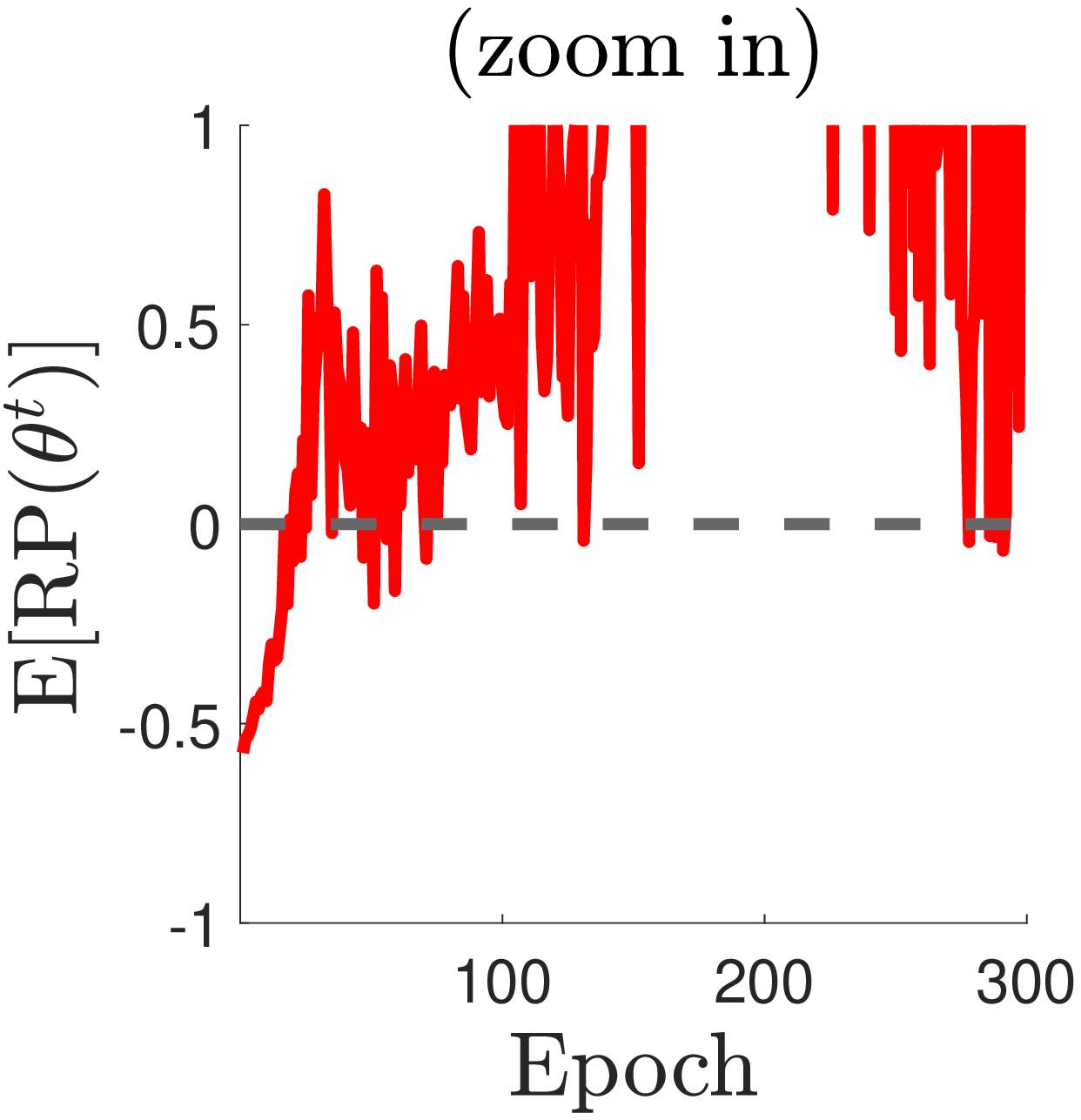}

	}
 
Note that $\E[\rr{\zz{t}}]$ does not stay below zero. Based on our discussion in \autoref{sec:loss}, this suggests that SGD is in the unstable regime.
\qqq
\end{experiment}
\begin{remark}[{Expected loss change is not negative?!}]
One very surprising aspect of the above results is that  $\E[\rr{\zz{t}}]$ is not negative for a majority of iterations. 
This is rather counter-intuitive given that in the loss plot SGD decreases the loss in the long run.
On the other hand, we note that this counter-intuitive phenomenon is also observed by \cite{Cohen2021} in a comprehensive set of experiments.
In particular, they mention ``\emph{what may be more surprising
is that SGD is not even decreasing the training loss in expectation}.''
See \citep[Appendix H]{Cohen2021}  and \citep[Figures 25 and 26]{Cohen2021} for details.
\end{remark}

We now establish a relation analogous to \eqref{rel:rp} for SGD. Similarly to \autoref{thm:equiv},  one can prove the following:
\begin{align*}
  &\E[\rr{\zz{}}]  =-1 +  \frac{\eta}{2} \cdot 2 \int_{0}^1 \tau  \E_g\left[{\textstyle \frac{\norm{g(\zz{})}^2 \cdot  \dir{\eta \tau g(\zz{})}{\zz{}}}{\norm{\nabla f(\zz{})}^2}  }\right]  \  \D \tau   \,.
\end{align*}
See \autoref{app:pf:equiv} for derivation.
Hence, the analogous relation to \eqref{rel:rp} is:
  \begin{align} \label{rel:sgd} 
      \E[ \rr{\zz{}}] \approx  -1+\frac{\eta}{2}\cdot   \E\left[\textstyle \frac{\norm{g(\zz{})}^2}{\norm{\nabla f(\zz{})}^2} \dir{\eta  g(\zz{})}{\zz{}}\right]. 
 \end{align}

\begin{experiment}[CIFAR-10; verifying \eqref{rel:sgd} for ReLU] \label{exp:sgd_verify} Under the same setting as \autoref{exp:sgd_relu}, we compute $\E[ \rr{\zz{}}]$ and the RHS of \eqref{rel:sgd} at the end of each epoch and compare those values.
We choose step sizes $\eta = 2/50,\ 2/100,\ 2/150$.

{\centering

\includegraphics[width=0.32\columnwidth]{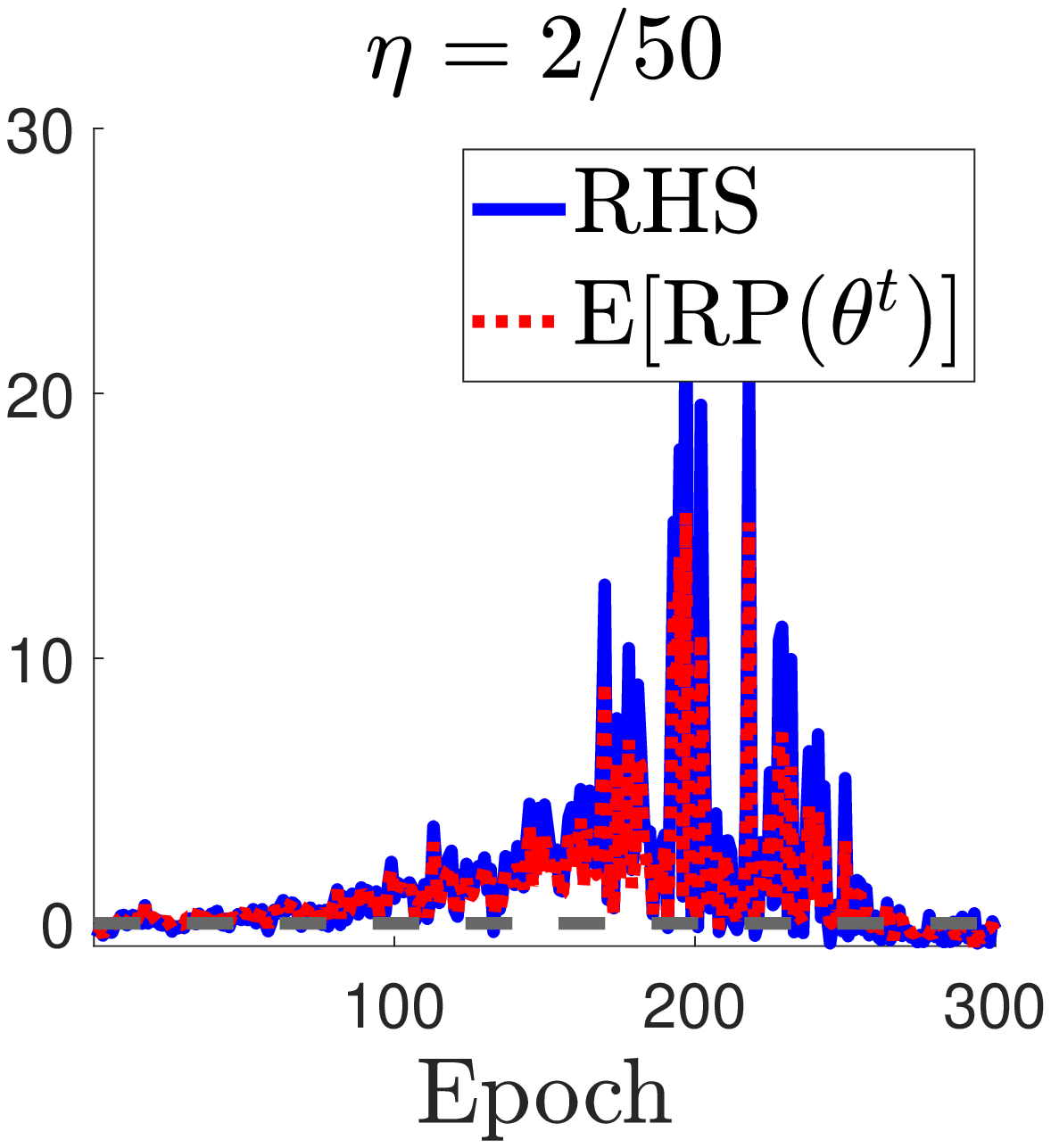} 
\includegraphics[width=0.32\columnwidth]{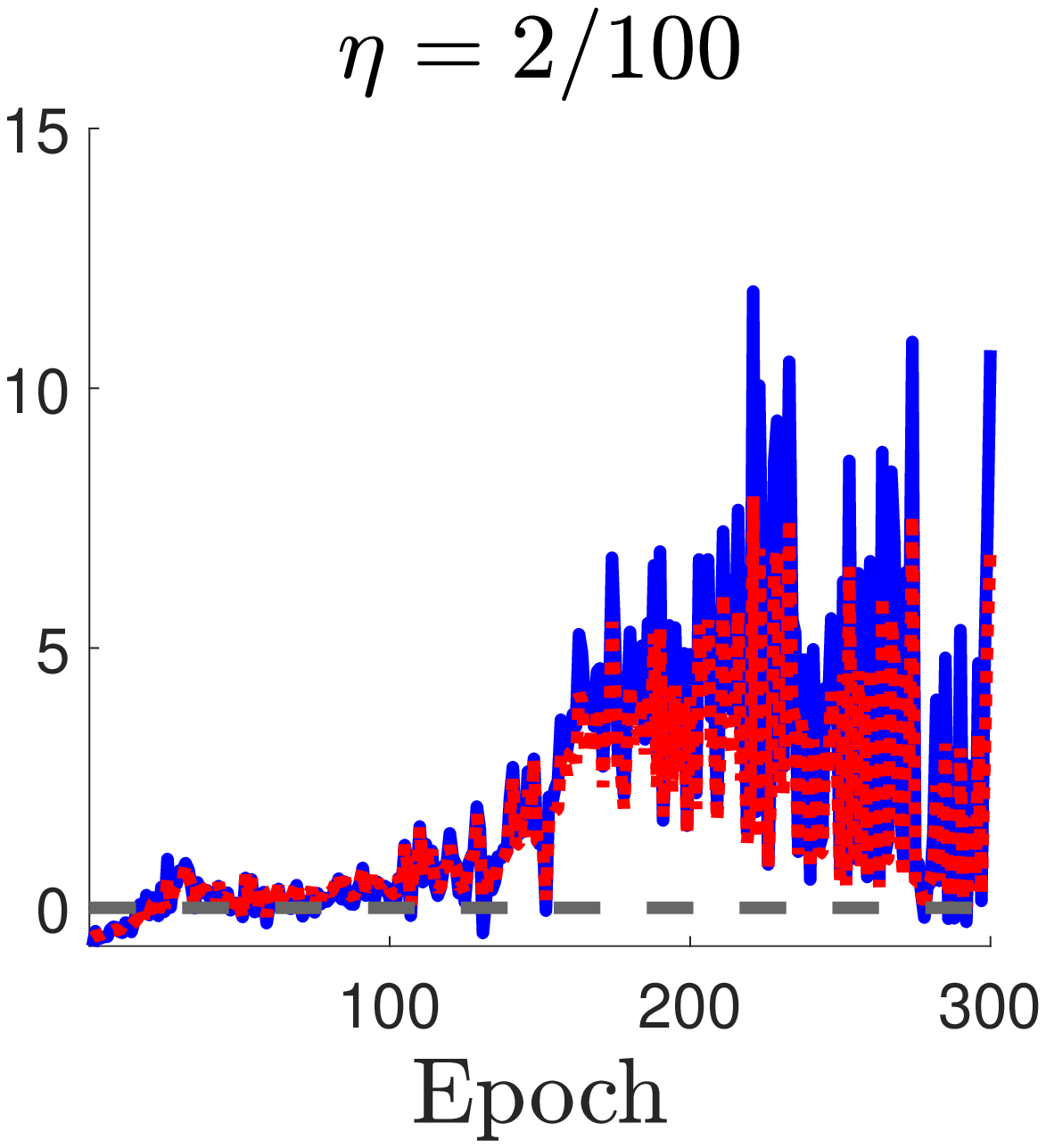} 
\includegraphics[width=0.32\columnwidth]{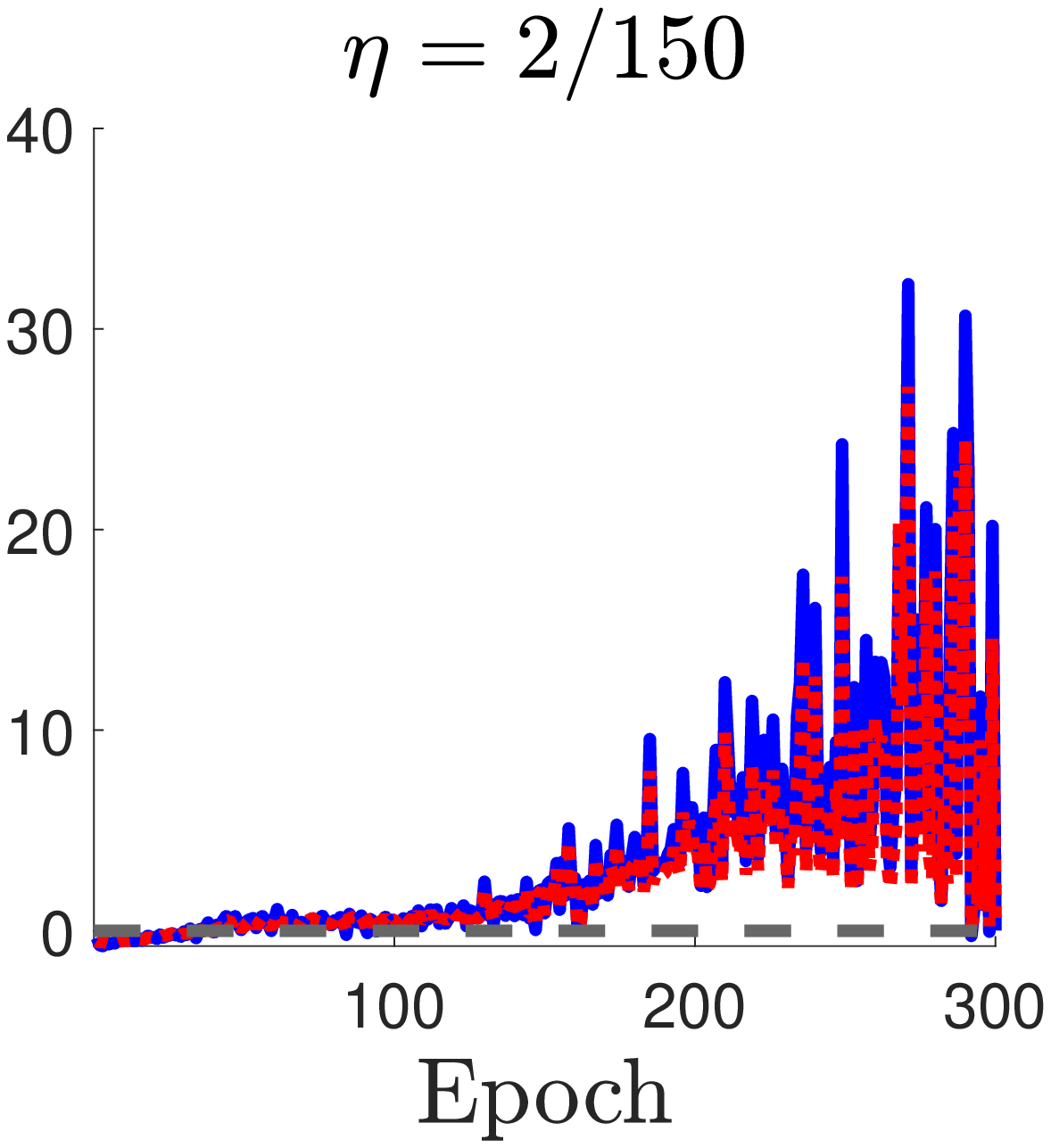}

 }
 Note that the LHS and RHS of \eqref{rel:sgd} are very similar in all results, verifying  the relation \eqref{rel:sgd}.
 \qqq
\end{experiment}

\begin{experiment}[CIFAR-10; verifying \eqref{rel:sgd} for $\tanh$] \label{exp:sgd_verify2} 

 We repeat \autoref{exp:sgd_verify} with $\tanh$ activations.

{\centering

\includegraphics[width=0.32\columnwidth]{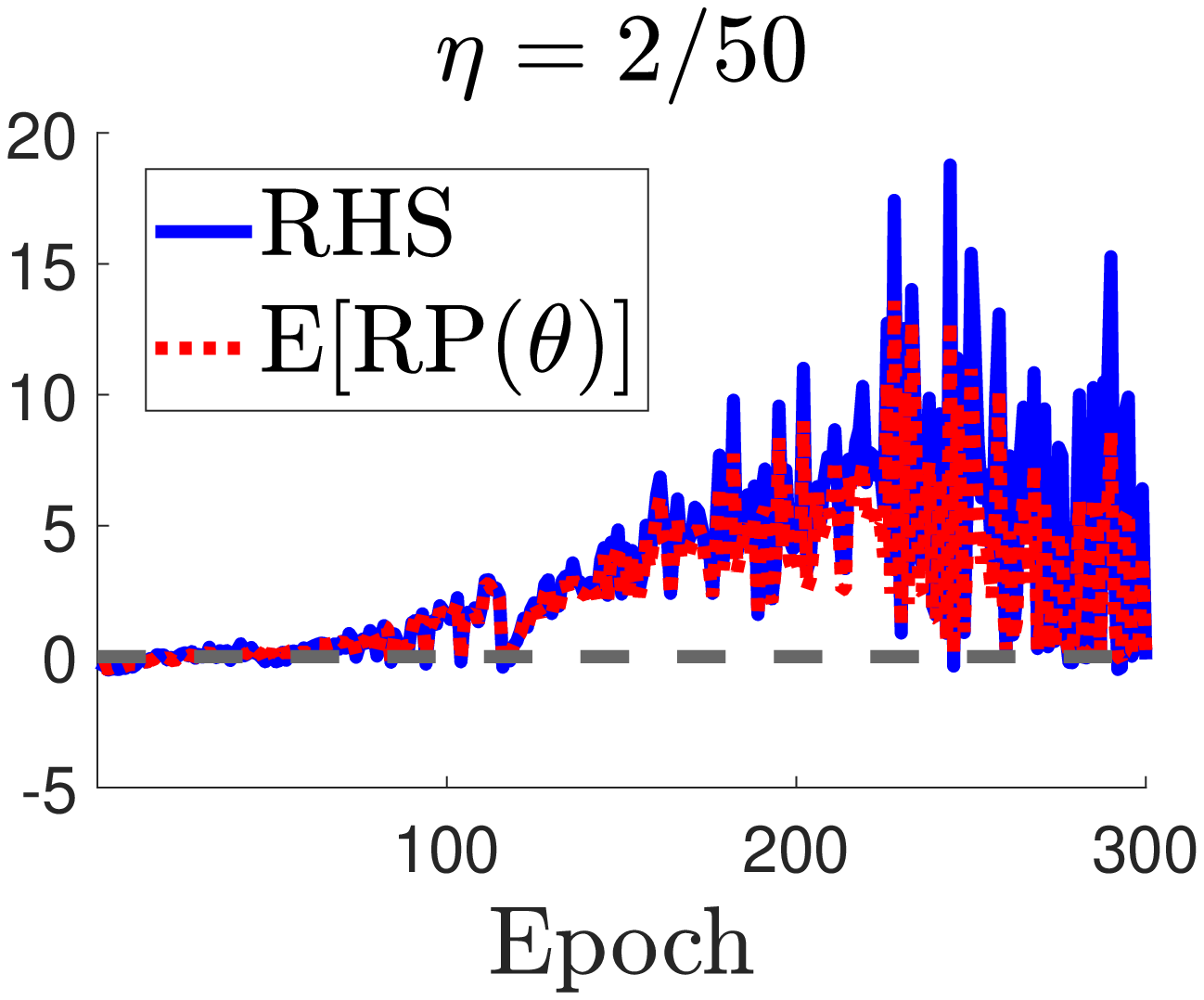}  
\includegraphics[width=0.32\columnwidth]{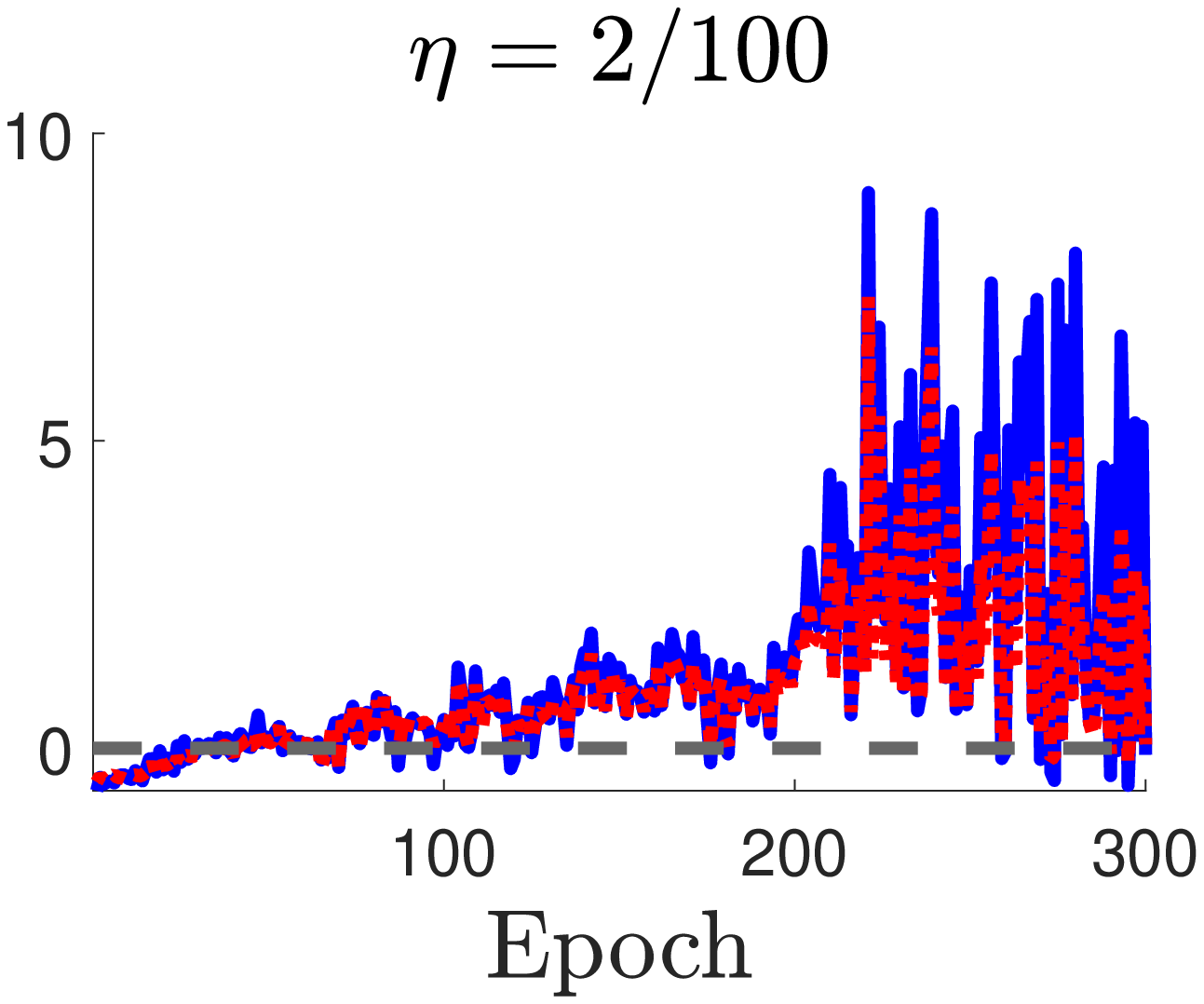}	\includegraphics[width=0.32\columnwidth]{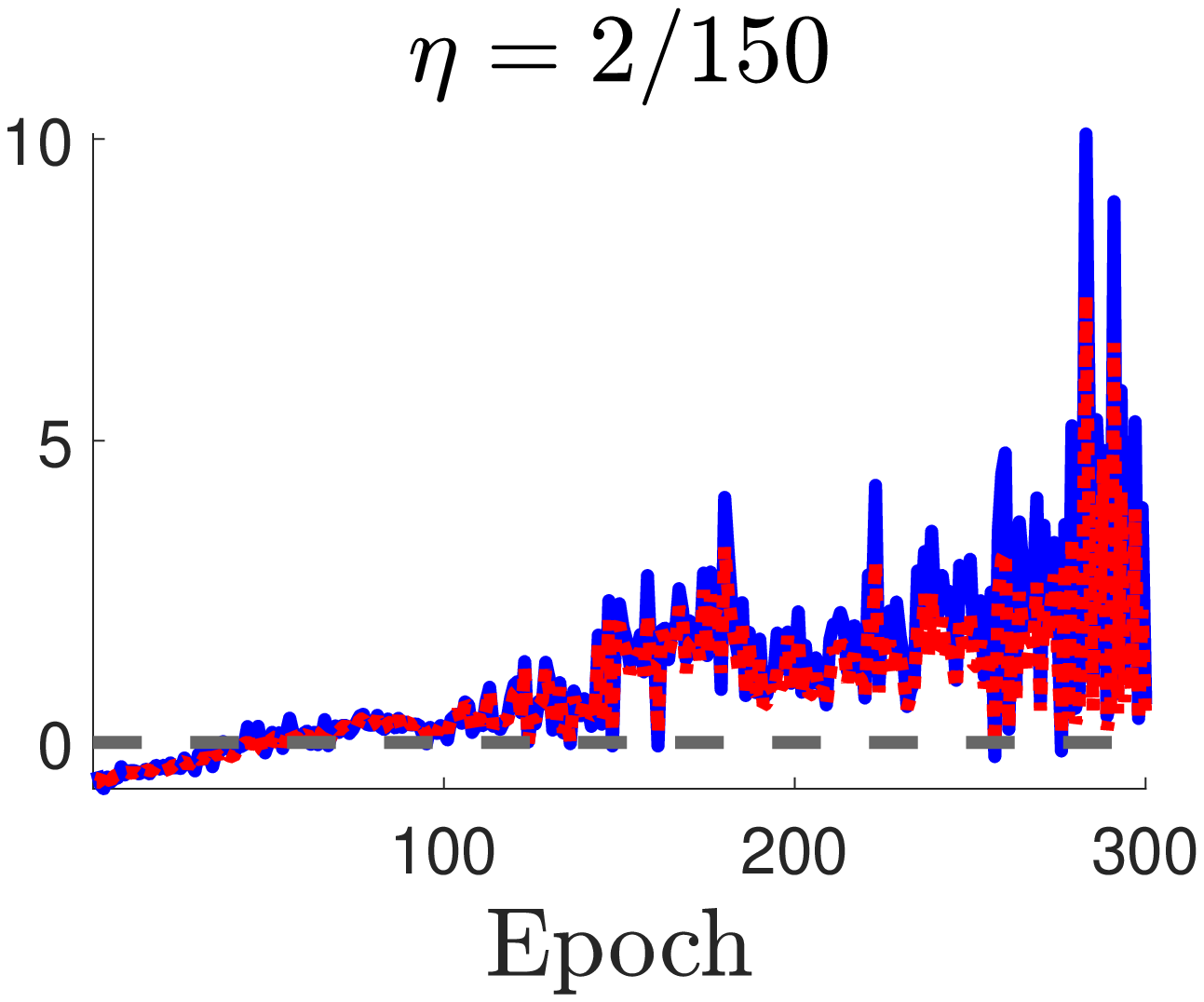}

\includegraphics[width=0.32\columnwidth]{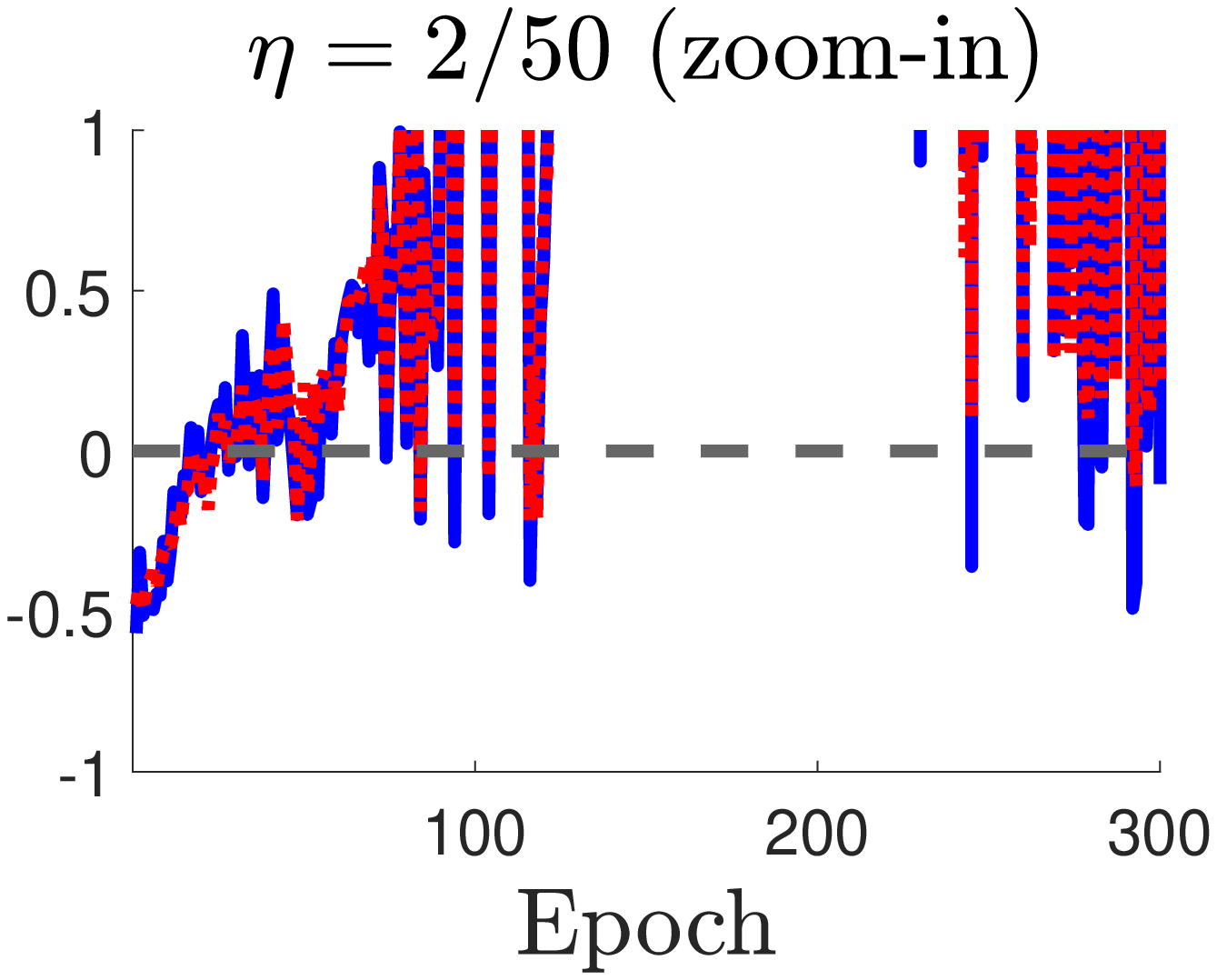}
\includegraphics[width=0.32\columnwidth]{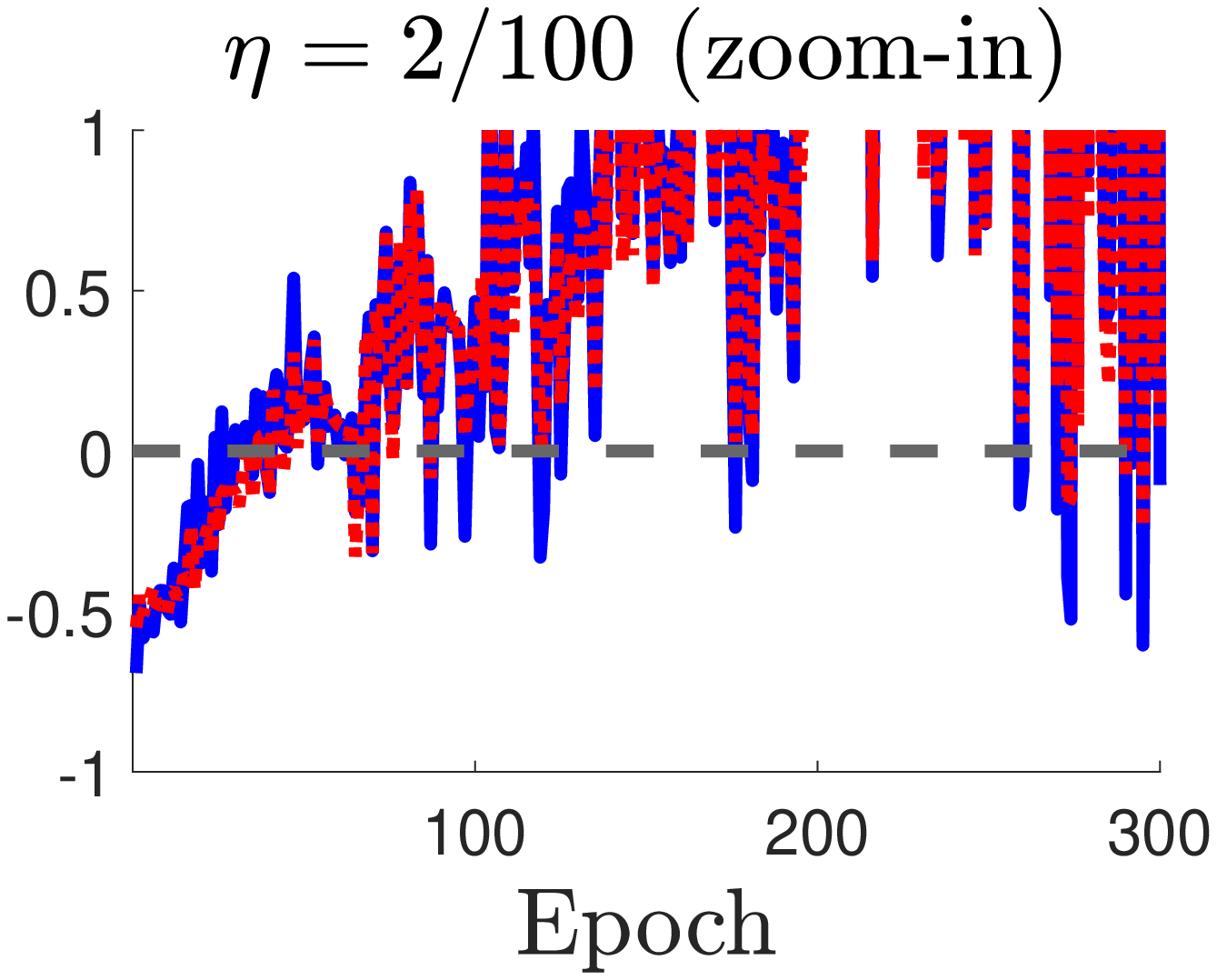}
\includegraphics[width=0.32\columnwidth]{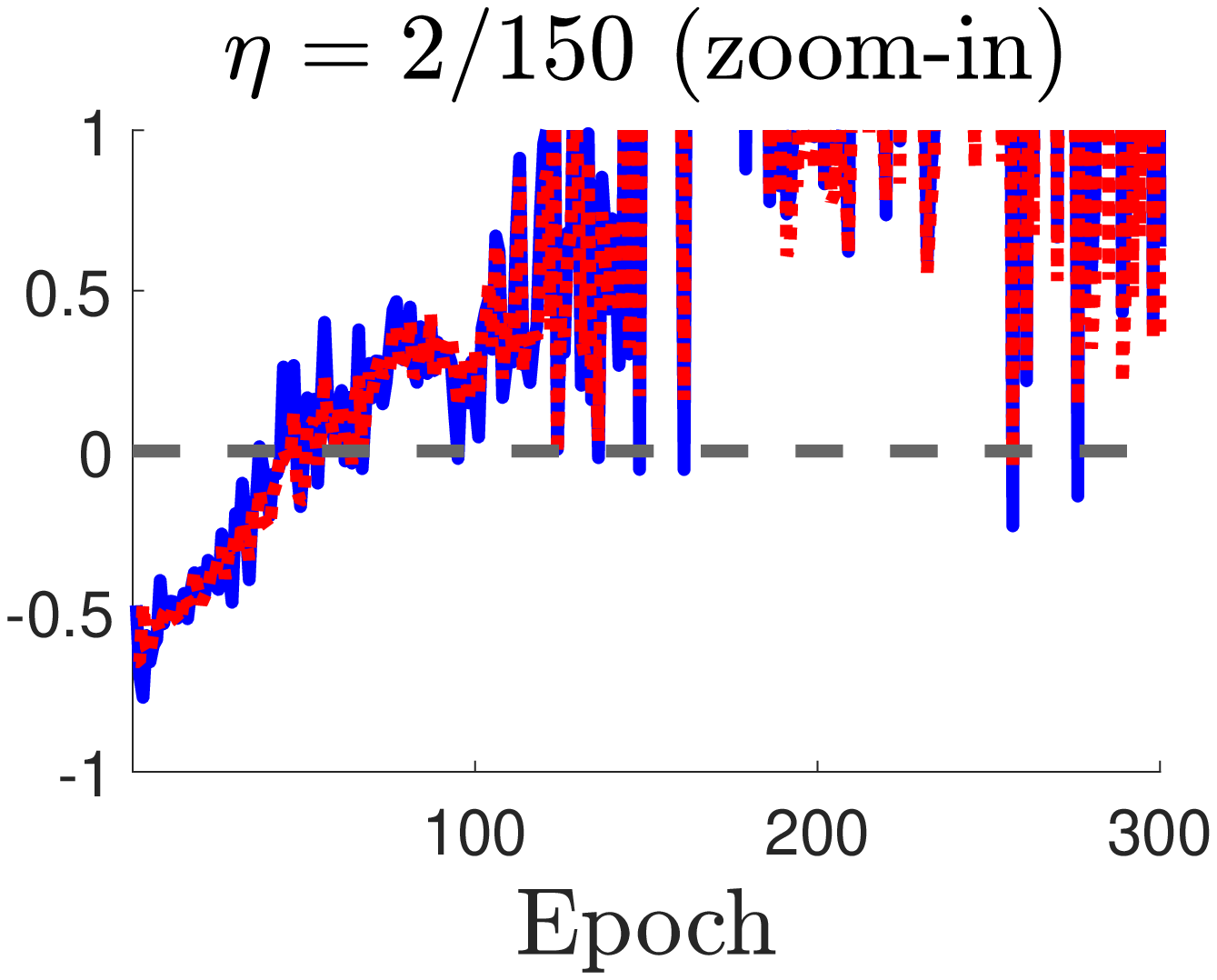}

 }
 
The results are similar to those for ReLU networks.\qqq
\end{experiment}

\section{Discussion and Future Directions}

This work demonstrated characteristics of unstable convergence that are in stark contrast with those of stable convergence.
Consequently, this work leads to several interesting future directions.\vspace{-5pt}
\begin{list}{{\tiny $\blacksquare$}}{\leftmargin=1.5em}   
  \item \emph{Better optimizer?} The characteristics based on the directional smoothness suggests that the adjacent iterates have nearly opposite gradients in the unstable regime. 
  This obviates the efficacy of many efficient methods (e.g. variance reduced methods) which are designed based on the intuition that the adjacent iterates have similar gradients.
  Hence, it would be interesting to design an efficient optimizer that respects the new characteristics.
  
  \item \emph{Faster training under unstable convergence?} As discussed in 
  \autoref{rmk:fast}, another striking feature of unstable convergence is that the training loss seems to converge faster.
  One interesting question is whether one can elucidate this faster optimization by further exploring our characterization of  relative progress.
  
  \item \emph{What assumptions are valid for neural network optimization?} It is clear that unstable convergence cannot be reasoned with the widespread condition of $\eta < \frac{2}{L}$.
  Then what assumptions would be valid for neural networks? 
  As discussed in \autoref{rmk:hessian_lip}, our \autoref{exp:const} suggests that  although the gradient Lipschitzness is not a good assumption for neural networks, some form of Hessian Lipschitzness might be a valid one. 

\item \emph{Unstable regime for adaptive methods?}
Our characterizations are limited to constant step size (S)GD, and it is not clear how these characterizations carry over to adaptive methods such as Adam and RMSProp. Investigating adaptive methods will help us understand how they differ from (S)GD for neural network optmization.
  \end{list}

\section*{Acknowledgements}
Kwangjun Ahn thanks {\bf Pourya Habib Zadeh} for various discussions and his help with experiments during the initial stage of this work.
Kwangjun Ahn also thanks {\bf Stephen J. Wright} and {\bf Sinho Chewi} for helpful discussions  while visiting the Simons Institute for the Theory of Computing.
Kwangjun Ahn also thanks {\bf Daniel Soudry} for fruitful discussions regarding \autoref{thm:master}.

Kwangjun Ahn and Suvrit Sra acknowledge support from an NSF CAREER grant (1846088), and NSF CCF-2112665 (TILOS AI Research Institute).
Kwangjun Ahn also acknowledges support from Kwanjeong Educational Foundation.
Jingzhao Zhang acknowledges support from IIIS young scholar fellowship.

\bibliographystyle{abbrvnat} 
\bibliography{ref}

\onecolumn
\appendix

\section{Proof of \autoref{thm:equiv}}
\label{app:pf:equiv}
 Using the fact
\begin{align*}
    \frac{\D}{\D \tau} \left[ f(\zz{} - \eta \tau   \nabla f(\zz{} )\right] = \inp{-\eta \nabla f(\zz{})}{\nabla f\big(\zz{} - \eta \tau \nabla f(\zz{})\big)}\,,
\end{align*}
we obtain 
\begin{align*}
  f\big(\zz{} -\eta \nabla f(\zz{})\big) - f(\zz{})  &= -\eta  \int_{0}^1 \inp{\nabla f(\zz{})}{ \nabla f\big(\zz{} - \eta \tau \nabla f(\zz{}) \big)} \D \tau\\
  &=-\eta \norm{\nabla f(\zz{})}^2-\eta  \int_{0}^1 \inp{\nabla f(\zz{})}{ \nabla f\big(\zz{} - \eta \tau \nabla f(\zz{}) \big)- \nabla f(\zz{})} \D \tau\,.
\end{align*}
Hence, after rearranging, we get
\begin{align*}
  \frac{\eta}{2} \cdot 2\int_{0}^1 \tau \cdot \dir{\eta \tau \nabla f(\zz{})}{\zz{}} \ \D \tau - 1=   \frac{f\big(\zz{} -\eta \nabla f(\zz{})\big) - f(\zz{})}{\eta \cdot \norm{\nabla f(\zz{})}^2 }   =   \rr{\zz{}}\,,
\end{align*}
which is precisely the relation in \autoref{thm:equiv}.

\paragraph{Derivation for the SGD case.} 
For the SGD case, the derivation is similar.
\begin{align*}
  & f\big(\zz{} -\eta g(\zz{})\big) - f(\zz{})
 \\
 &\quad  \overset{(a)}{=}     -\eta \int_{0}^1 \inp{g(\zz{})}{ \nabla f\big(\zz{} - \eta \tau g(\zz{}) \big)} \D \tau   \\
		&\quad  =     -\eta \inp{g(\zz{})}{\nabla f(\zz{})} -\eta  \int_{0}^1 \inp{g(\zz{})}{ \nabla f\big(\zz{} - \eta \tau g(\zz{}) \big)-\nabla f(\zz{})}   \D \tau
\end{align*}
where $(a)$ is due to the fact
\begin{align*}
    \frac{\D}{\D \tau} \left[ f(\zz{} - \eta \tau   g(\zz{} )\right] = \inp{-\eta g(\zz{})}{\nabla f\big(\zz{} - \eta \tau g(\zz{})\big)}\,.
\end{align*}
After taking expectation over the randomness in the stochastic gradient,  we obtain 
\begin{align*}
  & \E f\big(\zz{} -\eta g(\zz{})\big) - f(\zz{}) =     -\eta \norm{\nabla f(\zz{})}^2 -\eta  \int_{0}^1 \E \inp{g(\zz{})}{ \nabla f\big(\zz{} - \eta \tau g(\zz{}) \big)-\nabla f(\zz{})}   \D \tau
\end{align*}
Hence, after rearranging we obtain the desired equation:
\begin{align*}
 & \frac{\E f\big(\zz{} -\eta g(\zz{})\big) - f(\zz{})}{\eta \norm{\nabla f(\zz{})}^2} =     -1 +  \frac{\eta}{2} \cdot 2 \int_{0}^1 \tau\cdot \E_g\left[  \frac{\norm{g(\zz{})}^2 \cdot  \dir{\eta \tau g(\zz{})}{\zz{}}}{\norm{\nabla f(\zz{})}^2}  \right]  \  \D \tau \,.
\end{align*} 
This completes the derivation.

\end{document}